\DeclareMathOperator{\GL}{GL}
\DeclareMathOperator{\re}{re}
\DeclareMathOperator{\im}{im}
\DeclareMathOperator{\opm}{M}
\DeclareMathOperator{\ops}{S}
\newcommand{\ve}{\varepsilon}
\newcommand{\de}{\delta}
\newcommand{\N}{\mathbb{N}}
\newcommand{\R}{\mathbb{R}}
\newcommand{\C}{\mathbb{C}}
\newcommand{\cD}{\mathcal{D}}
\newcommand{\fp}{\mathfrak{p}}
\newcommand{\kk}{\mathbbm k}
\newcommand{\bkk}{\overline{\kk}}
\newcommand{\gX}{\Omega}
\newcommand*{\mat}[1]{\opm_{#1}(\kk)}
\newcommand*{\mc}[1]{\opm_{#1}(\C)}
\newcommand*{\sm}[1]{\ops_{#1}(\R)}
\newcommand{\all}{\mathbb{S}^d}
\newcommand{\Langle}{\mathop{<}\!}
\newcommand{\Rangle}{\!\mathop{>}}
\newcommand{\ulx}{\underline{x}}
\newcommand{\uly}{\underline{y}}
\newcommand{\ult}{\underline{t}}
\newcommand{\px}{\R\!\Langle \ulx\Rangle}
\newcommand{\ax}{\kk\!\Langle \ulx\Rangle}
\newcommand{\bx}{\bkk\!\Langle \ulx\Rangle}
\newcommand{\py}{\C\!\Langle \uly,\uly^*\Rangle}
\def\moverlay{\mathpalette\mov@rlay}
\def\mov@rlay#1#2{\leavevmode\vtop{
		\baselineskip\z@skip \lineskiplimit-\maxdimen
		\ialign{\hfil$#1##$\hfil\cr#2\crcr}}}
\newcommand{\plangle}{\moverlay{(\cr<}}
\newcommand{\prangle}{\moverlay{)\cr>}}
\newcommand{\rx}{\kk\plangle \ulx \prangle}
\newcommand{\eig}{eigenlevel set\xspace}
\newcommand{\eigs}{eigenlevel sets\xspace}
\newcommand{\Eigs}{Eigenlevel sets\xspace}
\newcommand{\wqc}{locally quasiconvex\xspace}
\newcommand{\Wqc}{Locally quasiconvex\xspace}
\newtheorem{thm}{Theorem}[section]
\newtheorem{lem}[thm]{Lemma}
\newtheorem{cor}[thm]{Corollary}
\newtheorem{prop}[thm]{Proposition}
\newtheorem{thmA}{Theorem}
\theoremstyle{definition}
\newtheorem{exa}[thm]{Example}
\theoremstyle{remark}
\newtheorem{rem}[thm]{Remark}
\numberwithin{equation}{section}
\title[Free Bertini's theorem and applications]{Free Bertini's theorem and applications}
\author[J. Vol\v{c}i\v{c}]{Jurij Vol\v{c}i\v{c}}
\address{Jurij Vol\v{c}i\v{c}, Department of Mathematics, Texas A\&M University}
\email{volcic@math.tamu.edu}
\subjclass[2010]{Primary 16U30, 13P05; 
	Secondary 47A56, 52A05}
\date{\today}
\keywords{Bertini's theorem, free algebra, noncommutative polynomial, composition, factorization, quasiconvex polynomial}
\begin{document}
	
\begin{abstract}
The simplest version of Bertini's irreducibility theorem states that the generic fiber of a non-composite polynomial function is an irreducible hypersurface. The main result of this paper is its analog for a free algebra: if $f$ is a noncommutative polynomial such that $f-\lambda$ factors for infinitely many scalars $\lambda$, then there exist a noncommutative polynomial $h$ and a nonconstant univariate polynomial $p$ such that $f=p\circ h$. Two applications of free Bertini's theorem for matrix evaluations of noncommutative polynomials are given. An {\it \eig} of $f$ is the set of all matrix tuples $X$ where $f(X)$ attains some given eigenvalue. It is shown that \eigs of $f$ and $g$ coincide if and only if $fa=ag$ for some nonzero noncommutative polynomial $a$. The second application pertains quasiconvexity and describes polynomials $f$ such that the connected component of $\{X \text{ tuple of symmetric $n\times n$ matrices}\colon \lambda I\succ f(X) \}$ about the origin is convex for all natural $n$ and $\lambda>0$. It is shown that such a polynomial is either everywhere negative semidefinite or the composition of a univariate and a convex quadratic polynomial.
\end{abstract}

\iffalse
The simplest version of Bertini's irreducibility theorem states that the generic fiber of a non-composite polynomial function is an irreducible hypersurface. The main result of this paper is its analog for a free algebra: if $f$ is a noncommutative polynomial such that $f-\lambda$ factors for infinitely many scalars $\lambda$, then there exist a noncommutative polynomial $h$ and a nonconstant univariate polynomial $p$ such that $f=p\circ h$. Two applications of free Bertini's theorem for matrix evaluations of noncommutative polynomials are given. An eigenlevel set of $f$ is the set of all matrix tuples $X$ where $f(X)$ attains some given eigenvalue. It is shown that eigenlevel sets of $f$ and $g$ coincide if and only if $fa=ag$ for some nonzero noncommutative polynomial $a$. The second application pertains quasiconvexity and describes polynomials $f$ such that the connected component of $\{X \text{ tuple of symmetric $n\times n$ matrices}: \lambda I\succ f(X) \}$ about the origin is convex for all natural $n$ and $\lambda>0$. It is shown that such a polynomial is either everywhere negative semidefinite or the composition of a univariate and a convex quadratic polynomial.
\fi

\maketitle

%%%%%%%%%%%%%%%%%%%%%%%%%%%%%%%%%%%%%%%%%%%%%%%%%%%

\section{Introduction}

Bertini's irreducibility theorem (see e.g. \cite[Theorem 2.26]{Sha}) is a fundamental result with a rich history \cite{Kle} and omnipresent in algebraic geometry. When applied to a multivariate polynomial function $f$ over a an algebraically closed field, it states that the hypersurface $\{f=\lambda\}$ is irreducible for all but finitely many values $\lambda$ unless $f$ is a composite with a univariate polynomial. This particular case is significant in its own right in commutative algebra, and has been extensively studied and generalized \cite{Sch,BDN}. In this paper we prove its analog for a free associative algebra and derive consequences of interest for free analysis \cite{KVV} and free real algebraic geometry \cite{HKM0,BPT}.

Let $\kk$ be a field and $d\in\N$. Let $\ax$ be the free associative $\kk$-algebra in freely noncommuting variables $\ulx=(x_1,\dots,x_d)$. Its elements are called {\bf noncommutative polynomials}. We say that $f$ {\bf factors} in $\ax$ if $f=f_1f_2$ for some nonconstant $f_1,f_2\in\ax$. Otherwise, $f$ is {\bf irreducible} over $\kk$. A nonconstant $f\in\ax$ is {\bf composite} (over $\kk$) if there exist $h\in\ax$ and a univariate polynomial $p\in\kk[t]$ such that $\deg p>1$ and $f=p\circ h=p(h)$. Our first main result is the free algebra analog of a special case of the classical Bertini's (irreducibility) theorem.

\begin{thmA}[Free Bertini's theorem]\label{t:a}
If $f\in\ax\!\setminus\kk$ is not composite, then $f-\lambda$ is irreducible over $\bkk$ for all but finitely many $\lambda\in\bkk$.
\end{thmA}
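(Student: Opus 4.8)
The plan is to adapt the classical argument via specialization, but the central difficulty in the free setting is that $\ax$ is not a unique factorization domain in the commutative sense — rather, it is a free ideal ring, and factorizations into atoms are governed by Cohn's theory of similarity and the notion of "full" matrices over $\ax$. So the first step is to set up the right algebraic framework: pass to the universal (skew) field of fractions $\rx$ of $\ax$, where every nonzero element is invertible and factorization of elements of $\ax$ is controlled by the rational closure. One then wants to phrase "$f-\lambda$ factors over $\bkk$" as a statement about the rank of a certain matrix (a linearization, or hollow/monic pencil representation of $f$) depending polynomially on $\lambda$. A noncommutative polynomial $g$ factors nontrivially iff its linearization $L_g(\ulx)$ — a linear matrix pencil $A_0 + \sum_i A_i x_i$ whose Schur complement recovers $g$ — is "hollow" up to association, i.e. can be brought by $\GL$ over $\ax$ into a block upper-triangular form. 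The key realization is that these conditions are constructible/closed in $\lambda$.

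Next I would run the one-parameter family. Write $f - \lambda$ and take a linearization $L_\lambda(\ulx) = L_f(\ulx) - \lambda e e^{\mathsf T}$ (rank-one perturbation of the linearization of $f$). Over the polynomial ring $\bkk[\lambda]$, consider $L_\lambda$ as a matrix over $\bkk[\lambda]\!\Langle\ulx\Rangle$ (or better, over its field of fractions, treating $\lambda$ as a central transcendental). If $f-\lambda$ were reducible for infinitely many $\lambda\in\bkk$, then — because reducibility is witnessed by the existence of an invertible matrix over $\ax$ conjugating $L_\lambda$ into block-triangular shape, with bounded block sizes (bounded by $\deg f$) — a counting/compactness argument over infinitely many $\lambda$ forces the same to happen generically, i.e. over $\bkk(\lambda)$. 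Concretely: the set of possible "shapes" of triangularizing transformations is a finite union of constructible conditions; infinitely many $\lambda$ satisfying a constructible condition over an infinite field means the generic $\lambda$ satisfies it. Hence $f-\lambda$ factors over the field $\bkk(\lambda)$, say $f - \lambda = g_1 g_2$ with $g_i \in \bkk(\lambda)\!\Langle\ulx\Rangle$.

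The heart of the argument is then to descend this generic factorization and extract the composition $f = p\circ h$. Clearing denominators, one gets $q(\lambda)(f-\lambda) = g_1(\lambda) g_2(\lambda)$ with $g_i$ having polynomial coefficients in $\lambda$; I would then analyze the $\lambda$-degrees. Since $f - \lambda$ is affine-linear in $\lambda$, comparing $\lambda$-degrees on both sides and using that $\ax$ has no zero divisors, one shows (after a gcd cleanup) that one factor, say $g_1$, is actually independent of $\lambda$ up to a unit, while $g_2$ is affine in $\lambda$: $g_2 = a - \lambda b$ with $a,b \in \ax$, $b$ a unit hence a nonzero scalar, so WLOG $g_2 = h - \lambda$ for some $h\in\ax$. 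Then $f - \lambda = g_1 \cdot(h-\lambda)$ as an identity in $\ax[\lambda]$, so substituting $\lambda = h$ (legitimate since $h$ is central in the subalgebra $\kk\langle h\rangle$ and we evaluate in $\ax$) gives... careful: substitution of a noncommutative element into a one-variable-over-$\ax$ identity needs $g_1$ to also be a polynomial in $h$. The correct move is: from $f-\lambda = g_1(h-\lambda)$ one reads off, by viewing both sides in $\ax[\lambda]$ and equating, that $g_1$ commutes with $h$ and $f = g_1 h - (\text{coefficient adjustments})$; iterating the same argument on $g_1$ (which also satisfies $g_1 - \mu$ reducible for the shifted family, or directly by induction on $\deg$) shows $g_1 \in \kk[h]$ and hence $f \in \kk[h]$ with $\deg_t > 1$, i.e. $f = p\circ h$ — contradicting non-compositeness. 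Running this contrapositive proves the theorem.

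The main obstacle I anticipate is the descent step: making rigorous the passage from "generic factorization over $\bkk(\lambda)$" back to "polynomial-in-$\lambda$ factorization over $\ax$," because naive denominator-clearing introduces a spurious polynomial factor $q(\lambda)$ and one must argue this factor is absorbed without breaking indecomposability — this is exactly where the classical proof uses unique factorization in $\bkk[\lambda][\ulx]$, and the noncommutative replacement requires Cohn's similarity/coprimality machinery (the fact that $\ax$ is a fir, so that "comaximal relations" can be manipulated) to conclude that the $\lambda$-content can be split off cleanly. A secondary technical point is ensuring the linearization-based reducibility criterion is genuinely a constructible condition uniformly in $\lambda$; this should follow from the boundedness of block sizes together with elimination of the entries of the conjugating matrix, but it must be stated carefully over a non-commutative coefficient ring.
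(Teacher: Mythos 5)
The fatal gap is in your descent step, and it occurs before the $\lambda$-degree bookkeeping even begins: the constructibility/Chevalley argument does not produce a factorization of $f-\lambda$ over $\bkk(\lambda)$, only over an algebraic extension of $\bkk(\lambda)$ (a point of the generic fiber of the dominant projection lives in $\overline{\bkk(\lambda)}$). This is not a technicality; it is the entire content of Bertini-type theorems. Indeed, if a nontrivial factorization $f-\lambda=g_1g_2$ existed with $g_i\in\bkk(\lambda)\!\Langle\ulx\Rangle$, then your own ``gcd cleanup'' (Gauss's lemma for the content in $\bkk[\lambda]$, which does hold here because reduction modulo a prime of $\bkk[\lambda]$ lands in a free algebra over a field, a domain) would normalize to primitive factors $\tilde g_1,\tilde g_2$ with polynomial coefficients and $\deg_\lambda\tilde g_1+\deg_\lambda\tilde g_2=1$; writing the $\lambda$-degree-one factor as $a-\lambda b$ and comparing coefficients of $\lambda$ on both sides forces the $\lambda$-degree-zero factor to be a unit of $\bx$, i.e.\ a scalar. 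So $f-\lambda$ is \emph{never} reducible over $\bkk(\lambda)$, whether or not $f$ is composite, and the argument cannot distinguish the two cases. The example $f=x_1^2$ makes the obstruction concrete: $f-\lambda=(x_1-\sqrt{\lambda})(x_1+\sqrt{\lambda})$ factors for every $\lambda$, the generic factorization genuinely requires the quadratic extension $\bkk(\sqrt{\lambda})$, and neither factor has the form $h-\lambda$ with $h\in\bx$. Consequently the later steps (reading off $g_2=h-\lambda$, substituting $\lambda=h$, iterating) have no starting point, and repairing them would require controlling how $p_\lambda,q_\lambda$ vary across the branch locus --- which is the actual difficulty you have assumed away.

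The paper takes a completely different, genuinely noncommutative route that never touches the generic fiber. From $f-\lambda=p_\lambda q_\lambda$ one gets $fp_\lambda=p_\lambda(q_\lambda p_\lambda+\lambda)$, so $f$ is stably associated to $q_\lambda p_\lambda+\lambda$ for each nonzero $\lambda$ in the infinite set. Bergman's finiteness theorem (Proposition \ref{p:berg}: a polynomial has only finitely many stable associates up to scalar) then pigeonholes two distinct values $\mu,\nu$ for which $q_\nu p_\nu+\nu$ is proportional to $q_\mu p_\mu+\mu$, and a uniqueness lemma for intertwiners (Lemma \ref{l:1}, resting on Bergman's centralizer theorem and the skew field $\rx$) forces $p_\nu\in\bkk p_\mu$, contradicting $p_\mu q_\mu-p_\nu q_\nu=\nu-\mu\neq0$ unless $f$ is composite. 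If you wish to salvage a specialization-style argument, some substitute for that finiteness theorem --- a uniform bound on how many essentially distinct factorizations can occur as $\lambda$ varies --- is the missing ingredient.
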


See Theorem \ref{t:bertini} for a more comprehensive statement and proof. Next we apply Theorem \ref{t:a} to matrix evaluations of noncommutative polynomials. Let $f\in\ax$. Given $X\in\mat{n}^d$ let $f(X)\in\mat{n}$ be the evaluation of $f$ at $X$. The {\bf \eig} of $f$ at $\lambda\in\kk$ is
$$L_\lambda(f)=\bigcup_{n\in\N} \left\{X\in\mat{n}^d\colon \lambda\ \text{is an eigenvalue of}\ f(X)\right\}.$$
In terms of \cite{KV,HKV,HKV1}, eigenlevel sets are free loci of polynomials $\lambda-f$, which have been intensively studied for their implications to domains of noncommutative rational functions \cite{KVV1}, factorization in a free algebra \cite{HKV,HKV1} and matrix convexity \cite{BPT,HKM,DDOSS}. Using Theorem \ref{t:a} we derive the following algebraic certificate for inclusion of \eigs (see Theorem \ref{t:eig} for the proof).

\begin{thmA}\label{t:b}
Let $\kk$ be an algebraically closed field of characteristic $0$ and $f,g\in\ax$. Then \eigs of $f$ are contained in \eigs of $g$ if and only if there exist nonzero $a,h\in\ax$ and $p\in\kk[t]$ such that $g=p(h)$ and $fa=ah$.
\end{thmA}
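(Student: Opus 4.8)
The plan is to prove both implications; the easy one is $(\Leftarrow)$, and Theorem \ref{t:a} is used only in $(\Rightarrow)$.

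\smallskip
\noindent\emph{The direction $(\Leftarrow)$.} Suppose $g=p(h)$ and $fa=ah$ with $a\neq0$. For every $\lambda\in\kk$ one has $(f-\lambda)a=fa-\lambda a=ah-\lambda a=a(h-\lambda)$, so the left ideals $\ax(f-\lambda)a$ and $\ax a(h-\lambda)$ coincide. Comparing Jordan--Hölder composition factors of the finite-length left module $\ax/\ax(f-\lambda)a=\ax/\ax a(h-\lambda)$ (Cohn's factorization theory in $\ax$: the composition factors of $\ax/\ax\, cd$ are the atoms of $c$ together with the atoms of $d$, up to similarity) and cancelling the contribution of $a$, one gets that $f-\lambda$ and $h-\lambda$ have the same atoms up to similarity. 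Since an \eig is a free locus and a free locus is determined by the similarity classes of the atoms of the defining polynomial \cite{HKV,HKV1}, this gives $L_\lambda(f)=L_\lambda(h)$. Finally $g=p(h)$ yields $\operatorname{spec}g(X)=p(\operatorname{spec}h(X))$ for all tuples $X$, hence $L_\mu(g)=\bigcup_{p(\nu)=\mu}L_\nu(h)$ and in particular $L_\lambda(f)=L_\lambda(h)\subseteq L_{p(\lambda)}(g)$.

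\smallskip
\noindent\emph{The direction $(\Rightarrow)$.} Assume that for every $\lambda$ there is $\mu(\lambda)$ with $L_\lambda(f)\subseteq L_{\mu(\lambda)}(g)$; scalar edge cases ($f$ or $g$ in $\kk$) are immediate, so assume both are non-scalar. Write $f=p_1(f_0)$ and $g=p_0(g_0)$ with $f_0,g_0\in\ax$ non-composite and $p_0,p_1\in\kk[t]$ of positive degree (such a non-composite ``core'' exists, taking $\deg p_i$ maximal). Because $L_\nu(f_0)\subseteq L_{p_1(\nu)}(f)\subseteq L_{\mu(p_1(\nu))}(g)$, the hypothesis passes to $f_0$, so I first treat the non-composite case. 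By Theorem \ref{t:a}, $f_0-\lambda$ is irreducible over $\bkk=\kk$ for all $\lambda$ outside a finite set, and likewise (applying Theorem \ref{t:a} to $g_0$) for all $\mu$ outside a finite set $g-\mu$ equals a scalar times $\prod_{i=1}^{\deg p_0}(g_0-\nu_i(\mu))$ with the $\nu_i(\mu)$ distinct and each factor irreducible. Since by \cite{HKV} a single-atom free locus contained in a union of free loci of atoms must equal one of them, the inclusion $L_\lambda(f_0)\subseteq L_{\mu(\lambda)}(g)$ forces, for all $\lambda$ in a cofinite set $V$, that $f_0-\lambda$ is similar to $g_0-\phi(\lambda)$ for some $\phi(\lambda)\in\kk$. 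One checks that $f_0-\lambda\not\sim f_0-\lambda'$ and $g_0-c\not\sim g_0-c'$ for distinct arguments (an \eig determines its eigenvalue for a non-scalar polynomial, e.g. via evaluations on scalar points when the commutative collapse is non-constant), so $\phi$ is injective on $V$; a pigeonhole over the finitely many atoms of a fixed $g-\mu$ also shows $\mu$ is not essentially constant, which legitimizes the ``generic $\mu$'' reductions above for infinitely many $\lambda$.

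\smallskip
The crux is to make the family of similarities $f_0-\lambda\sim g_0-\phi(\lambda)$, $\lambda\in V$, uniform in $\lambda$. Parametrising comaximal relations witnessing these similarities and spreading out over $\kk(t)$ (the parameter variety dominates the affine line because $V$ is infinite), one obtains a relation $(f_0-t)\beta=\beta'(g_0-t)$ in $\kk(t)\!\Langle\ulx\Rangle$ with $\beta\neq0$; here Theorem \ref{t:a} enters again, in the form that $f_0-t$ and $g_0-t$ are irreducible over $\kk(t)$, and also to see that $\phi$ extends to an \emph{affine} function of $t$ (a rational function injective on a dense set has degree $1$, and the pole of a genuine Möbius map is excluded by specialising the cleared-out relation), so after an affine change of $g_0$ we may take $\phi=\operatorname{id}$. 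Writing $\beta'=\sum_i b_i't^i$ and comparing coefficients of powers of $t$ in $(f_0-t)\beta=\beta'(g_0-t)$ produces the identity $f_0\gamma=\gamma g_0$ with $\gamma:=\sum_i f_0^i b_i'$; the technical point is to choose the relation (e.g. of minimal $t$-degree, using the irreducibility of $f_0-t,g_0-t$) so that $\gamma\neq0$. Setting $a:=\gamma$, $h:=p_1(g_0)$ and iterating $f_0\gamma=\gamma g_0$ to get $p_1(f_0)\gamma=\gamma p_1(g_0)$ yields $fa=ah$ with $a,h\neq0$. Finally the full hypothesis on $f=p_1(f_0)$ forces $p_0$ to be constant on the generic fibres of $p_1$: for generic $\lambda$ each $\nu\in p_1^{-1}(\lambda)$ satisfies $f_0-\nu\sim g_0-\nu\sim g_0-\xi$ for some $\xi\in p_0^{-1}(\mu(\lambda))$, whence $\nu=\xi$ and $p_0(\nu)=\mu(\lambda)$; therefore $p_0=p\circ p_1$ for some $p\in\kk[t]$ of positive degree, and $g=p_0(g_0)=p(p_1(g_0))=p(h)$. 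The main obstacle is exactly this uniformity/spreading-out step — passing from similarities at infinitely many $\lambda$ to a single nonzero intertwiner $\gamma\in\ax$, and arranging the comaximal relations so that $\gamma$ does not collapse to $0$.
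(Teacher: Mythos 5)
Your overall route coincides with the paper's: both directions reduce $f$ and $g$ to non-composite cores $f=p_1(f_0)$, $g=p_0(g_0)$, use free Bertini plus the free-locus irreducibility results of \cite{HKV} to get that $f_0-\lambda$ and $g_0-\phi(\lambda)$ are similar (stably associated) for cofinitely many $\lambda$, normalize $\phi$ to the identity, interpolate these similarities into a single relation over $\kk[t]\otimes\ax$, and finish with the univariate observation that $p_0$ must factor through $p_1$. Your $(\Leftarrow)$ via composition factors of $\ax/\ax(f-\lambda)a$ is a legitimate substitute for the paper's determinant computation with generic matrices. The final step $p_0=p\circ p_1$ matches the paper's Remark \ref{r:uni}.

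There is, however, one genuine gap, precisely at the point you flag as ``the technical point'': ensuring $\gamma=\sum_i f_0^i b_i'\neq 0$. Your proposed remedy --- take a relation $(f_0-t)\beta=\beta'(g_0-t)$ of minimal $t$-degree --- does not work as stated. The left evaluation $\gamma$ vanishes exactly when $\beta'=(t-f_0)C'$ for some $C'\in\kk[t]\otimes\ax$, in which case $\beta=-C'(g_0-t)$ and the relation is the trivial identity $(f_0-t)(-C')(g_0-t)=\bigl((t-f_0)C'\bigr)(g_0-t)$; such trivial relations exist with $C'$ of $t$-degree $0$ and arbitrary $x$-degree, so minimality in $t$ does not exclude them, and irreducibility of $f_0-t$, $g_0-t$ alone does not rule out that your spreading-out procedure lands on one. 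The paper closes this hole with a degree bound in $\ulx$: Lemma \ref{l:cohn} (via continuant polynomials) produces, for each $\lambda$, witnesses $a_\lambda,b_\lambda$ of the stable association with $\deg a_\lambda<\de:=\deg g_0$; interpolating these yields $A,B\in\kk[t]\otimes\ax$ with $\deg_{\ulx}A<\de$, and then the remainder $a=A-C(t-g_0)$ of $A$ upon right division by $t-g_0$ is automatically nonzero, since any nonzero multiple $C(t-g_0)$ has $\ulx$-degree at least $\de$. You need this (or an equivalent quantitative control on the witnesses) to make the argument complete. A secondary soft spot: your claim that $\phi$ is a rational function of $\lambda$ (so that injectivity forces it to be affine) is asserted rather than proved; the paper instead reads off the affine normalization $h_2\mapsto\alpha h_2+\beta$ directly from equality of the determinants of two irreducible reduced hypersurface equations, which is the cleaner way to justify this step.
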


Lastly we turn to noncommutative polynomials describing convex matricial sets. Let $\sm{n}\subseteq \opm_{n}(\R)$ denote the subspace of symmetric matrices. In \cite{BM}, a symmetric $f\in\px$ with $f(0)=0$ is called {\it quasiconvex} if for every $n\in\N$ and positive definite $A\in\sm{n}$, the set
\begin{equation}\label{e:qc}
\{X\in\sm{n}^d\colon A-f(X)\ \text{is positive semidefinite}\}
\end{equation}
is convex; see \cite[Subsection 1.1]{BM} for the relation with the classical (commutative) notion of quasiconvexity. Furthermore, in \cite[Theorem 1.1]{BM} the authors showed that every quasiconvex polynomial is either convex quadratic or minus a sum of hermitian squares (i.e., $-f=\sum_m h^*_mh_m$ for some $h_m\in\px$, in which case the set \eqref{e:qc} equals $\sm{n}^d$ for every $A\succ0$ and $n\in\N$).

To relate quasiconvexity more closely to the notion of a free semialgebraic set \cite{HM,HKM0} in free real algebraic geometry, 
we say that a symmetric $f\in\px$ with $f(0)=0$ is {\bf \wqc} if there exists $\ve>0$ such that the connected component of
$$\{X\in\sm{n}^d\colon \lambda I-f(X)\ \text{is positive definite}\}$$
containing the origin is convex for every $n\in\N$ and $\lambda\in (0,\ve)$.

\begin{thmA}\label{t:c}
If $f\in\px$ is \wqc, then either $-f$ is a sum of hermitian squares or $f=p(\ell_0+\ell_1^2+\cdots+\ell_m^2)$ for some $p\in\R[t]$ and linear $\ell_0,\dots,\ell_m\in\px$.
\end{thmA}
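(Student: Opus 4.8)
The plan is to bootstrap Theorem \ref{t:c} from the Ben-Tal--Nemirovski-type dichotomy of \cite{BM} (the classification of quasiconvex polynomials) together with free Bertini's theorem, Theorem \ref{t:a}. So the first step is to observe that \wqc is a genuine weakening of quasiconvexity: a quasiconvex $f$ has the sets \eqref{e:qc} convex for \emph{all} $A\succ0$, whereas \wqc only asks that the \emph{connected component about the origin} of the strict version $\{\lambda I - f(X)\succ0\}$ be convex, and only for small $\lambda$. The conclusion of Theorem \ref{t:c} is also weaker, allowing a univariate $p$ applied to a ``convex quadratic'' $\ell_0+\ell_1^2+\cdots+\ell_m^2$. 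So morally the strategy is: from local quasiconvexity, extract enough rigidity to force the \emph{level sets} $\{f=\lambda\}$ (really: the eigenlevel sets $L_\lambda(f)$) to be, generically, boundaries of convex bodies, hence cut out by a single quadratic-ish equation up to reparametrization; then feed the factorization structure this produces into Theorem \ref{t:a}.

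Concretely, I would proceed as follows. Assume $-f$ is not a sum of hermitian squares. The first real step is to show that the boundary $\partial K_\lambda$ of the origin-component $K_\lambda$ of $\{\lambda I\succ f(X)\}$ is, for small $\lambda$, contained in the eigenlevel set $L_\lambda(f)$, and that the convexity of $K_\lambda$ plus the assumption $f(0)=0$ pins down the behavior of $f$ near $0$: the quadratic part of $f$ must be (up to sign and reparametrization) positive semidefinite on the relevant directions, so after subtracting we learn that $f$ has a ``convex quadratic envelope''. The key algebraic step, and the second step, is to run Theorem \ref{t:a}: local quasiconvexity should be leveraged to show that $f-\lambda$ is \emph{reducible} for infinitely many real $\lambda$ — intuitively because a convex hypersurface that is a matrix level set tends to split off a hermitian-square-type factor (one can think of $\lambda I - f(X)\succeq 0$ being witnessed on the boundary by a kernel vector, producing a nontrivial common left/right factor of $\lambda - f$ in the free algebra along the lines of the eigenlevel-set factorization results \cite{HKV,HKV1}). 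Hence by Theorem \ref{t:a}, $f = p(h)$ for some $h\in\px$ and nonconstant $p\in\R[t]$. The third step is then to bootstrap: $h$ inherits a local quasiconvexity property from $f$ (the level sets of $h$ refine those of $f$, and $p$ monotone on a neighborhood of $h(0)=0$ translates convexity of $K_\lambda(f)$ into convexity of the corresponding component for $h$), and by minimality of $\deg h$ we may assume $h$ is \emph{not} composite; now $h$ is locally quasiconvex and non-composite, so the previous paragraph combined with \cite[Theorem 1.1]{BM} forces $h$ itself to be convex quadratic, i.e.\ $h=\ell_0+\ell_1^2+\cdots+\ell_m^2$ up to an affine change (the ``$-$ sum of hermitian squares'' alternative for $h$ is excluded because then $-f=-p(h)$ with $h\preceq 0$, and one checks the only way this is locally quasiconvex-but-not-SOS is absorbed into the quadratic case). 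Assembling, $f=p\circ(\ell_0+\ell_1^2+\cdots+\ell_m^2)$, as claimed.

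The main obstacle, I expect, is the passage from the \emph{geometric} hypothesis (local convexity of a single connected component of a strict matrix inequality, for small $\lambda$ only) to the \emph{algebraic} hypothesis needed to invoke Theorem \ref{t:a} (namely that $f-\lambda$ factors over $\bkk$ for infinitely many $\lambda$). In the classical/global quasiconvex setting of \cite{BM} one has the full family of sets \eqref{e:qc} and a matrix-positivity (Positivstellensatz-type) argument; here one must work with germs and be careful that (i) the origin-component is the ``right'' component, i.e.\ that boundary points actually certify an eigenvalue crossing, and (ii) the factorization of $\lambda-f$ extracted at the matrix level (say from a kernel vector of $f(X)-\lambda I$ that persists along the boundary) genuinely descends to a factorization in the free algebra $\bx$ — this is where the eigenlevel-set machinery and Theorem \ref{t:b}-type certificates enter, and where the characteristic-zero, algebraically-closed hypotheses are used. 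A secondary technical point is the inductive descent on $\deg h$ in the third step: one needs the local quasiconvexity property to be genuinely inherited by $h$ with the \emph{same} (or a controlled) $\ve$, which requires that $p$ be strictly monotone near $0$ and that $h(0)=0$ can be arranged; handling the case where $p$ has a critical point at $0$ (so $p$ is not locally injective at the relevant value) is a real case distinction rather than a routine one.
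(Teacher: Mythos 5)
There is a genuine gap, and it sits exactly where you predicted the ``main obstacle'' would be; but the resolution you sketch goes in the wrong direction. Your key algebraic step asserts that local quasiconvexity forces $f-\lambda$ to be \emph{reducible} for infinitely many $\lambda$ (``a convex hypersurface that is a matrix level set tends to split off a hermitian-square-type factor''). This is false: the model examples $f=\ell_0+\ell_1^2+\cdots+\ell_m^2$ are themselves \wqc and $\lambda-f$ is typically irreducible. The paper's argument is a case split, not an implication: either $\lambda-f$ is irreducible over $\C$ for \emph{some} small $\lambda>0$, in which case one is done immediately by Proposition \ref{p:hkmv} (the specialization of \cite[Theorem 1.5]{HKMV}: an irreducible symmetric polynomial with a proper convex positivity domain is of the form $\ell_0+\sum_k\ell_k^2$), or $\lambda-f$ factors for all $\lambda\in(0,\ve)$, and only then does Theorem \ref{t:a} produce $f=p(h)$. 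Proposition \ref{p:hkmv} is the indispensable geometric-to-algebraic engine of the whole proof, and your proposal never invokes it or any substitute for it.

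Your third step has a second, related gap. Once $f=p(h)$ with $h$ non-composite, you want $h$ to ``inherit'' quasiconvexity and then apply \cite[Theorem 1.1]{BM}. But that theorem needs \emph{global} quasiconvexity (convexity of the full sublevel set for every $A\succ0$), which $h$ does not inherit; all one gets is that the origin-component satisfies $\cD(\lambda-f)=\cD(-\nu_\lambda+h)\cap\cD(\pi_\lambda-h)$, where $\pi_\lambda,\nu_\lambda$ are the zeros of $\lambda-p$ nearest $0$ on either side. Convexity of this intersection does not formally give convexity of either factor; the paper needs \cite[Corollary 1.2]{HKMV} (irredundant intersections of convex free semialgebraic sets have convex constituents) for the irredundant case, plus Lemma \ref{l:quad} and a separate argument when one set contains the other. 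Your instinct that ``$p$ having a critical point at $0$ is a real case distinction'' is right, but the actual bookkeeping is in terms of $\pi_\lambda,\nu_\lambda$ and is where conditions (a) and (b) of Theorem \ref{t:wqc} come from; without Proposition \ref{p:hkmv} and the HKMV intersection result there is no way to close either branch.
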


A precise biconditional statement is given in Theorem \ref{t:wqc} below.

\subsection*{Acknowledgments}
The author thanks George Bergman for enlightening correspondence and Igor Klep for valuable suggestions.

\section{Preliminaries}\label{s2}

We start with reviewing certain notions and technical results from the factorization theory of P. M. Cohn \cite{Coh} that will be used throughout the paper. Most of this theory is based on the fact that $\ax$ is a free ideal ring (see e.g. \cite[Corollary 2.5.2]{Coh}), which will be implicitly used when referring to the existing literature.

Noncommutative polynomials $f_1,f_2\in\ax$ are {\bf stably associated} \cite[Section 0.5]{Coh} if there exist $P_1,P_2\in\GL_2(\ax)$ such that $f_2\oplus 1 =P_1(f_1\oplus 1)P_2$. Equivalently \cite[Proposition 0.5.6 and Theorem 2.3.7]{Coh}, there exist $g_1,g_2\in\ax$ such that $f_1,g_2$ are left coprime, $g_1,f_2$ are right coprime, and
\begin{equation}\label{e:sa0}
f_1g_1=g_2f_2.
\end{equation}
Here left (right) coprime refers to the absence of a non-invertible common left (right) factor. The importance of stable association steams from the fact that factorization of a noncommutative polynomial into irreducible factors is unique up to stable association of factors \cite[Proposition 3.2.9]{Coh}. The following finiteness result was first proved by G. M. Bergman in his doctoral thesis.

\begin{prop}[Bergman, {\cite[Exercise 2.8.8]{Coh}}]\label{p:berg}
Given $f\in\ax$, there are (up to a scalar multiple) only finitely many polynomials stably associated to $f$.
\end{prop}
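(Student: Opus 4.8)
The plan is to bound the "size" of any polynomial stably associated to a fixed $f\in\ax$ and then argue that there are only finitely many polynomials of bounded size up to scalar, using the discreteness of the relevant invariants. The key structural input is the characterization of stable association recalled just above: $f_1\sim f_2$ means there exist $g_1,g_2\in\ax$ with $f_1,g_2$ left coprime, $g_1,f_2$ right coprime, and $f_1g_1=g_2f_2$. I would first observe that stably associated elements have the same degree. Indeed, from $f_1g_1 = g_2 f_2$ one gets $\deg f_1+\deg g_1 = \deg g_2 + \deg f_2$; the coprimeness conditions force $\deg g_1=\deg g_2$ (the module-theoretic interpretation: $f_1,f_2$ generate isomorphic "torsion" modules $\ax/f_i\ax$ — more precisely the left $\ax$-modules $\ax/\ax f_i$ — and the transfer matrices $P_1,P_2\in\GL_2(\ax)$ must then have degree profiles that match up), whence $\deg f_1=\deg f_2=:\delta$. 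So every polynomial stably associated to $f$ lies in the finite-dimensional space $V_\delta$ of noncommutative polynomials of degree at most $\delta=\deg f$.

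Now the issue reduces to showing that the stable-association class of $f$, intersected with $V_\delta$, meets only finitely many scalar rays, equivalently that it is a finite union of scalar rays. The cleanest route is to use the free ideal ring structure and the module interpretation: $f_1$ and $f_2$ are stably associated iff the left modules $\ax/\ax f_1$ and $\ax/\ax f_2$ are isomorphic (Cohn, Theorem 2.3.7, cited in the excerpt). A polynomial $g$ of degree exactly $\delta$ with $\ax/\ax g \cong \ax/\ax f$ is determined by the composite $\ax \twoheadrightarrow \ax/\ax f \xrightarrow{\ \sim\ } \ax/\ax g$; conversely any surjection $\ax \to \ax/\ax f$ of left modules whose kernel is principal generated in degree $\delta$ yields such a $g$, well-defined up to a scalar (since two generators of the same principal left ideal in a fir differ by a unit, and the units of $\ax$ are the nonzero scalars). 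Thus the set of scalar rays of polynomials stably associated to $f$ injects into $\mathrm{Hom}_{\ax}(\ax,\ax/\ax f)/\!\sim$ cut out by the finitely many surjections with principal kernel of the right degree. The finiteness then follows because $\ax/\ax f$ is a finite-dimensional $\kk$-vector space (dimension $\delta$ on the left as a $\kk$-space — here one uses that $\ax$ is a fir so $\ax f$ is free of rank one and a $\kk$-basis of $\ax/\ax f$ is given by a truncated set of monomials), and a left module surjection $\ax\to \ax/\ax f$ is just a choice of generator, i.e. an element of the finite-dimensional space $\ax/\ax f$ itself; the condition "kernel is principal, generated in degree $\delta$" is an algebraic condition, and one checks it carves out finitely many rays — or, more robustly, one invokes that the set of such generators is a constructible subset of a finite-dimensional space that is stable under scaling and has the property that distinct rays give non-associated (hence genuinely different) $g$'s only finitely often because $\deg g\le\delta$ bounds everything.

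The step I expect to be the main obstacle is making the last paragraph genuinely finite rather than merely "parametrized by a finite-dimensional space" — a finite-dimensional space still has infinitely many rays. The honest fix, and the one I would actually write up, is to pin down the degree data on all four polynomials $f_1,g_1,g_2,f_2$ and on the transfer matrices $P_1,P_2\in\GL_2(\ax)$ simultaneously: because $\GL_2(\ax)$ over a fir is generated by elementary and diagonal (scalar) matrices, and the relation $f_2\oplus 1 = P_1(f_1\oplus 1)P_2$ must preserve total degree, one can normalize $P_1,P_2$ to have entries of bounded degree (bounded in terms of $\delta$), so that $f_2$ ranges over the image of a polynomial map from a fixed finite-dimensional space of matrix entries — and now the image, being the image of an algebraic map and contained in $V_\delta$, together with the rigidity that generic fibers are single scalar rays, is a finite union of rays. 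Alternatively — and this is likely Cohn's intended solution for an exercise — one reduces to the known fact that over a fir the isomorphism classes of cyclic torsion modules of bounded length form a finite set once the ground field is fixed and the length is bounded, which is precisely the content being asked for; I would cite \cite[Theorem 2.3.7]{Coh} and the preceding structure theory and then give the degree-counting normalization of $P_1,P_2$ as the one genuinely new computation.
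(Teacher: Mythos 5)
There is a genuine gap, and it is worth noting first that the paper does not prove this proposition at all: it is quoted as Bergman's theorem with a reference to \cite[Exercise 2.8.8]{Coh} and used as a black box. So any argument here is new relative to the paper, and yours unfortunately does not close.

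The central error is the claim that $\ax/\ax f$ is a finite-dimensional $\kk$-vector space of dimension $\delta=\deg f$ ``with basis a truncated set of monomials.'' That is the one-variable picture; for $d\ge 2$ the quotient is infinite-dimensional (e.g.\ $\ax/\ax x_1$ has as a basis all words not ending in $x_1$). Since your entire finiteness mechanism is ``generators of a finite-dimensional module form a finite-dimensional space,'' the argument collapses at exactly the point where the real content lies. Your fallback in the last paragraph does not repair this: (1) no bound on the degrees of the entries of $P_1,P_2$ is actually established (the generation of $\GL_2(\ax)$ by elementary and diagonal matrices gives no degree control by itself, and the paper's own example in Section 4 shows that one-sided intertwiners $a$ with $fa=ag$ can be forced to have degree larger than $\deg f$); (2) even granting such a bound, the image of an algebraic map landing in the finite-dimensional space $V_\delta$ is in general a positive-dimensional constructible set and so contains infinitely many scalar rays --- ``generic fibers are single rays'' is both unproved and insufficient; and (3) the closing alternative (``isomorphism classes of cyclic torsion modules of bounded length form a finite set, which is precisely the content being asked for'') is, as you half-admit, circular. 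Finally, the preliminary step $\deg f_1=\deg f_2$ is true but your justification (``the coprimeness conditions force $\deg g_1=\deg g_2$'') is not an argument; the paper proves this separately as Lemma \ref{l:cohn} via Cohn's continuant-polynomial machinery. If you want an honest proof of the proposition you should consult Bergman's thesis or work Cohn's exercise; citing it, as the paper does, is the appropriate move here.
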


We will also require degree bounds on ``witnesses'' of stable association in \eqref{e:sa0}.

\begin{lem}\label{l:cohn}
If $f_1,f_2\in\ax\setminus\kk$ are stably associated, then $\deg f_1=\deg f_2$ and there exist nonzero $g_1,g_2\in\ax$ such that $\deg g_i<\deg f_i$ and $f_1g_1=g_2f_2$.
\end{lem}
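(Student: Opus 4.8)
The plan is to take the witnesses supplied by the quoted characterization of stable association and then improve them using that $\ax$ satisfies the weak algorithm with respect to degree. By \cite[Proposition 0.5.6 and Theorem 2.3.7]{Coh} there are $g_1,g_2\in\ax$ with $f_1g_1=g_2f_2$, the pair $g_1,f_2$ right coprime and the pair $f_1,g_2$ left coprime. Since $\ax$ is a domain and $f_1,f_2\notin\kk$, both $g_i$ are nonzero: if $g_1=0$ then $g_2f_2=0$ forces $g_2=0$, and then the noninvertible $f_1$ is a common left factor of $f_1$ and $g_2$, against left coprimeness.

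First I would establish $\deg f_1=\deg f_2$. The relation $f_1g_1=g_2f_2$ exhibits a nonzero common left multiple of $g_1$ and $f_2$; since $\ax$ is a fir, $\ax g_1\cap\ax f_2$ is a nonzero principal left ideal $\ax m$, where $m$ is the least common left multiple of $g_1$ and $f_2$. Because $g_1,f_2$ are right coprime, the weak algorithm forces $\deg m=\deg g_1+\deg f_2$, and $f_1g_1\in\ax m$ then gives $\deg f_1+\deg g_1\ge\deg g_1+\deg f_2$, i.e.\ $\deg f_1\ge\deg f_2$. The mirror computation, applied to the common right multiple $f_1g_1=g_2f_2$ of the left coprime pair $f_1,g_2$, yields $\deg g_1\ge\deg g_2$. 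Since degree is additive on $\ax$ and $\deg f_1+\deg g_1=\deg f_2+\deg g_2$, both inequalities are equalities; set $D:=\deg f_1=\deg f_2$.

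Next comes the reduction. Suppose $\deg g_1\ge D$; by the degree identity $\deg g_2=\deg g_1\ge D=\deg f_1$. The relation $f_1g_1-g_2f_2=0$, in which both products have degree $D+\deg g_1$, makes $\{f_1,g_2\}$ a right $d$-dependent pair, so the weak algorithm produces $v\in\ax$ with $\deg(f_1v)\le\deg g_2$ and $\deg(g_2-f_1v)<\deg g_2$. Replace $(g_1,g_2)$ by $(g_1-vf_2,\,g_2-f_1v)$: the identity persists since $f_1(g_1-vf_2)=g_2f_2-f_1vf_2=(g_2-f_1v)f_2$; any common right factor of $g_1-vf_2$ and $f_2$ also right-divides $(g_1-vf_2)+vf_2=g_1$ and is therefore invertible, so $g_1-vf_2\ne0$ (otherwise $f_2\notin\kk$ would be a noninvertible common right factor) and right coprimeness of the first entry with $f_2$ survives; and $\deg(g_1-vf_2)=\deg(g_2-f_1v)<\deg g_1$ by the degree identity. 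Iterating terminates with $\deg g_1<D$, whence $\deg g_2=\deg g_1<D$, both entries still nonzero, which is the claim.

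I expect the genuinely delicate point to be the formula $\deg m=\deg g_1+\deg f_2$ for the least common left multiple of a right coprime pair: this is exactly where the weak algorithm, rather than just the fir axioms, is needed, and I would want to cite the precise statement from \cite{Coh}. (An alternative route to $\deg f_1=\deg f_2$ is to extend the degree valuation of $\ax$ to the free skew field $\rx$; it annihilates multiplicative commutators and is hence invariant under the equivalence $f_2\oplus1=P_1(f_1\oplus1)P_2$ defining stable association, the Dieudonné determinants of $P_1,P_2\in\GL_2(\ax)$ being scalars.) The reduction step itself is then only filtered‑degree bookkeeping.
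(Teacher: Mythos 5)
Your proof is correct, but it takes a genuinely different route from the paper's. The paper invokes Cohn's continuant machinery: by \cite[Proposition 2.7.6]{Coh} one writes $f_1=\alpha\,\fp_s(b_1,\dots,b_s)$ and $f_2=\beta\,\fp_s(b_s,\dots,b_1)$ with all $b_i$ nonconstant (after absorbing constant outer arguments), so $\deg f_1=\deg f_2$ is immediate because the degree of a continuant in nonconstant arguments is the sum of the degrees of its arguments, and the witnesses $g_1,g_2$ are read off directly from the identity $\fp_s(b_1,\dots,b_s)\fp_{s-1}(b_{s-1},\dots,b_1)=\fp_{s-1}(b_1,\dots,b_{s-1})\fp_s(b_s,\dots,b_1)$. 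You instead start from the coprime relation $f_1g_1=g_2f_2$ and run a Euclidean-type reduction via the weak algorithm; the bookkeeping in your reduction step checks out, including the preservation of coprimality and nonvanishing, and the tie case $\deg g_2=\deg f_1$ is harmless since there the weak algorithm's coefficient is a nonzero scalar and the $d$-dependence can be reversed to point the right way. The point you flag as delicate --- the degree balance $\deg m=\deg g_1+\deg f_2$ for the least common left multiple of a right coprime pair, equivalently that comaximal relations in $\ax$ are degree-balanced --- is indeed the crux: it is precisely the content the paper extracts from the continuant decomposition, and it is available in Cohn's treatment of the ($2$-term) weak algorithm, so your argument does close; your Dieudonn\'e-determinant alternative also works, granted that $\GL_2(\ax)$ is generated by elementary and scalar diagonal matrices and that the degree function extends to a valuation on the free skew field, both of which are in \cite{Coh}. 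The trade-off: your approach yields an explicit division procedure (pleasantly compatible with the remark after the lemma about stable association reducing to a finite linear system), while the paper's is shorter because the continuant identity delivers both conclusions in one stroke.
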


\begin{proof}
Following \cite[Section 2.7]{Coh}, continuant polynomials $\fp_n\in \kk\!\Langle y_1,\dots,y_n\Rangle$ are recursively defined as
$$\fp_0=1,\qquad \fp_1=y_1,\qquad \fp_n=\fp_{n-1}y_n+\fp_{n-2} \ \ \text{for}\ \ n\ge2.$$
By  \cite[Proposition 2.7.6]{Coh} there exist $\alpha_1,\alpha_2\in\kk$ and $a_1,\dots,a_r\in\ax$ such that
$$f_1=\alpha_1 \fp_r(a_1,\dots,a_r),\qquad f_2=\alpha_2 \fp_r(a_r,\dots,a_1)$$
and $a_i$ are nonconstant for $1<i<r$. If $a_r=0$, then
$$\fp_r(a_1,\dots,a_r)=\fp_{r-2}(a_1,\dots,a_{r-2}),\quad \fp_r(a_r,\dots,a_1)=\fp_{r-2}(a_{r-2},\dots,a_1).$$
If $a_r\in\kk\setminus\{0\}$, then an easy manipulation of the recursive relation for $\fp_n$ yields
$$\fp_r(a_1,\dots,a_r)=a_r \fp_{r-1}(a_1,\dots,a_{r-1}+\tfrac{1}{a_r}),\quad
\fp_r(a_r,\dots,a_1)=a_r \fp_{r-1}(a_{r-1}+\tfrac{1}{a_r},\dots,a_1).$$
Analogous conclusions hold for $a_1\in\kk$. Hence there exist $\beta_1,\beta_2\in\kk$ and nonconstant $b_1,\dots,b_s\in\ax$ such that 
$$f_1=\beta_1 \fp_s(b_1,\dots,b_s),\qquad f_2=\beta_2 \fp_s(b_s,\dots,b_1).$$
Since
$$\fp_s(b_1,\dots,b_s)\fp_{s-1}(b_{s-1},\dots,b_1) = \fp_{s-1}(b_1,\dots,b_{s-1})\fp_s(b_s,\dots,b_1)$$
holds by \cite[Lemma 2.7.2]{Coh} and the degree of a continuant polynomial in nonconstant arguments equals the sum of degrees of its arguments by the recursive relation,
$$b_1=\tfrac{1}{\beta_1}\fp_{s-1}(b_{s-1},\dots,b_1),\quad b_2=\tfrac{1}{\beta_2}\fp_{s-1}(b_1,\dots,b_{s-1})$$
satisfy $\deg b_1=\deg b_2<\deg f_1=\deg f_2$.
\end{proof}

\begin{rem}
While probably known to the specialists for factorization in free algebras, Lemma \ref{l:cohn} implies that checking whether two irreducible polynomials are stably associated corresponds to solving a (finite) linear system. 
\end{rem}

 Let $\gX^{n}=(\gX_1^{n},\dots,\gX_d^{n})$ be a tuple of {\bf generic $n\times n$ matrices} whose $dn^2$ entries are commuting independent variables are viewed as coordinates of the affine space $\mat{n}^d$.

\begin{lem}[{\cite[Lemma 2.2]{HKV1}}]\label{l:det}
If $f\in\ax$ is nonconstant, then $\det f(\gX^{(n)})$ is nonconstant for large enough $n\in\N$.
\end{lem}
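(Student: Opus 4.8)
The plan is to reduce the statement to the top-degree homogeneous part of $f$ and then force that part to take an invertible value by an explicit substitution. Put $m=\deg f\ge1$ and let $f_m\in\ax$ be the degree-$m$ homogeneous component of $f$, so $f_m\ne0$. For any $n$ write $f(\gX^{(n)})=\sum_{k=0}^m f_k(\gX^{(n)})$, where every entry of $f_k(\gX^{(n)})$ is a homogeneous polynomial of degree $k$ in the $dn^2$ coordinates of $\mat{n}^d$. Expanding $\det f(\gX^{(n)})$ by the Leibniz formula and sorting the resulting monomials by total degree, the only way to reach the maximal degree $mn$ is to pick the degree-$m$ part of each of the $n$ entries occurring in a term of that expansion; hence the homogeneous component of $\det f(\gX^{(n)})$ in degree $mn$ is precisely $\det f_m(\gX^{(n)})$. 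Therefore, as soon as $\det f_m(\gX^{(n)})$ is not the zero polynomial, $\det f(\gX^{(n)})$ has a nonzero homogeneous component in the positive degree $mn$ and is in particular nonconstant. It thus suffices to prove: for every nonzero homogeneous $g\in\ax$ of degree $m\ge1$ and every $n\ge m$, one has $\det g(\gX^{(n)})\not\equiv0$.

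For this I would substitute ``cyclic shift times diagonal'' matrices. Fix $n\ge m$, let $S\in\mat{n}$ be the cyclic shift $Se_i=e_{i+1\bmod n}$ of the standard basis, and for a diagonal $D=\operatorname{diag}(\delta_0,\dots,\delta_{n-1})$ write $D^{(s)}$ for the diagonal matrix with $(D^{(s)})_{ii}=\delta_{i-s\bmod n}$, so that $S^sD=D^{(s)}S^s$. For each variable substitute $x_j\mapsto X_j:=D_jS$ with $D_j=\operatorname{diag}(d_{j,0},\dots,d_{j,n-1})$ and all $d_{j,r}$ independent indeterminates. Pushing every $S$ to the right, a word $w=x_{j_1}\cdots x_{j_m}$ becomes $w(X)=\bigl(\prod_{k=1}^m D_{j_k}^{(k-1)}\bigr)S^m$, so, writing $g=\sum_{|w|=m}c_w w$, we get $g(X)=\widetilde D\,S^m$ with $\widetilde D$ diagonal and $(\widetilde D)_{ii}=\sum_{w=x_{j_1}\cdots x_{j_m}}c_w\prod_{k=1}^m d_{j_k,\,i-k+1\bmod n}$. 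Since $n\ge m$, the residues $i,i-1,\dots,i-m+1$ are pairwise distinct modulo $n$, so from the monomial $\prod_{k=1}^m d_{j_k,\,i-k+1}$ one reads off the word $w$; hence distinct words give distinct monomials, and as some $c_w\ne0$, every diagonal entry of $\widetilde D$ is a nonzero polynomial in the $d_{j,r}$. Consequently $\det g(X)=\pm\prod_{i}(\widetilde D)_{ii}\ne0$, and since this is obtained from $\det g(\gX^{(n)})$ by replacing the generic entries with the entries of the $X_j$, the polynomial $\det g(\gX^{(n)})$ is not identically zero.

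Combining the two steps shows that $\det f(\gX^{(n)})$ is nonconstant for every $n\ge\deg f$. The routine parts are the degree bookkeeping in the first paragraph and the commutation rule $S^sD=D^{(s)}S^s$. The one step carrying real content is the second: the cyclic substitution is arranged so that the monomials of $f_m$ are recorded on separate coordinates, indexed by the $m$ pairwise distinct residues $i,i-1,\dots,i-m+1$, which is exactly what prevents cancellation and forces an invertible value. I expect this — and in particular the requirement $n\ge m$ that keeps those residues distinct — to be the point that needs care.
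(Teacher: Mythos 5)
Your proof is correct, and it is worth noting that the paper itself does not prove this lemma at all: it is quoted verbatim from \cite[Lemma 2.2]{HKV1}, so there is no in-text argument to compare against. Your two-step argument is a clean, self-contained replacement. The degree bookkeeping in the first step is right: the degree-$mn$ homogeneous component of $\det f(\gX^{(n)})$ is exactly $\det f_m(\gX^{(n)})$, so nonvanishing of the latter forces $\det f(\gX^{(n)})$ to have a nonzero component in positive degree, hence to be nonconstant (as a polynomial, which is what the lemma asserts, so no issue arises over finite $\kk$). The second step is also sound: with $x_j\mapsto D_jS$ and $n\ge m$, the residues $i,i-1,\dots,i-m+1$ are pairwise distinct, so the monomial $\prod_k d_{j_k,\,i-k+1}$ determines the word $w$ (the residue identifies the position $k$, the first index then identifies the letter), whence no cancellation occurs in $(\widetilde D)_{ii}$ and $\det g(X)=\pm\prod_i(\widetilde D)_{ii}\ne0$; since the entries of the $X_j$ are distinct indeterminates or zeros, this specialization witnesses $\det g(\gX^{(n)})\not\equiv0$. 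The shift-times-diagonal substitution is a standard device for separating words of bounded length, and your version even yields the explicit quantitative form ``for all $n\ge\deg f$'' rather than merely ``for large enough $n$''. The one point I would flag for polish, not correctness: when you invoke the Leibniz formula you should say explicitly that the degree-$mn$ parts coming from different permutations reassemble into $\det f_m(\gX^{(n)})$ with the correct signs (they do, since taking the top homogeneous part of each entry commutes with the alternating sum), but this is routine.
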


\section{Free Bertini's theorem}\label{s3}

In this section we prove our first main result (Theorem \ref{t:bertini}). First we show that a certain linear equation in a free algebra has a unique solution (up to a scalar multiple).

\begin{lem}\label{l:1}
Let $f,g\in\ax$ and assume $f$ is not composite. If nonzero $\alpha\in\kk$ and $b_1,b_2\in\ax$ satisfy
$$fb_1=b_1g,\quad fb_2=\alpha b_2 g,\quad \deg b_1=\deg b_2<\deg f,$$
then $\alpha=1$ and $b_2\in \kk b_1$.
\end{lem}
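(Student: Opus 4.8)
The plan is to prove the two conclusions separately: $\alpha=1$ via matrix evaluations, and $b_2\in\kk b_1$ by working in the universal field of fractions of $\ax$. Comparing degrees in $fb_1=b_1g$ shows $\deg g=\deg f$, and since $\deg b_1<\deg f$ we may assume $b_1\ne0$ (otherwise $b_2=0$ too and there is nothing to prove about $b_2\in\kk b_1$), so $\deg f\ge1$, $g$ is nonconstant, and $b_1,b_2$ are nonzero. For $\alpha=1$, pick $n$ large enough that $\det b_1(\gX^{(n)})$, $\det b_2(\gX^{(n)})$ and $\det g(\gX^{(n)})$ are all nonzero in $\kk[\gX^{(n)}]$; this is possible by Lemma~\ref{l:det}, since $g$ is nonconstant and each $b_i$ is either nonconstant or a nonzero scalar. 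Evaluating both hypotheses at $\gX^{(n)}$ and working over the field $\kk(\gX^{(n)})$, where $b_1(\gX^{(n)})$ and $b_2(\gX^{(n)})$ are invertible, exhibits $f(\gX^{(n)})$ as similar to both $g(\gX^{(n)})$ and $\alpha\,g(\gX^{(n)})$; hence $g(\gX^{(n)})\sim\alpha\,g(\gX^{(n)})$, and equating determinants gives $(\alpha^n-1)\det g(\gX^{(n)})=0$, so $\alpha^n=1$. As this holds for all large $n$, in particular for two consecutive ones, $\alpha=1$.

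Now both $b_1,b_2$ lie in $\cV:=\{c\in\ax\colon fc=cg\}$, which is a left $\kk[f]$-submodule of $\ax$ (if $fc=cg$, then $f(f^ic)=f^i(fc)=f^i(cg)=(f^ic)g$). Pass to the universal field of fractions $U$ of $\ax$, which exists since $\ax$ is a free ideal ring. From $fb_1=b_1g$ we get $b_1^{-1}f=gb_1^{-1}$ in $U$, and combining this with $fb_2=b_2g$ shows that $w:=b_2b_1^{-1}$ satisfies $wf=b_2(b_1^{-1}f)=b_2gb_1^{-1}=(fb_2)b_1^{-1}=fw$, i.e.\ $w$ lies in the centralizer $C_U(f)$. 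The key input is that $C_U(f)=\kk(f)$ because $f$ is not composite --- a free-field analogue of Bergman's theorem that a centralizer in $\ax$ equals $\kk[h]$ for some $h$, with non-compositeness forcing $h$ to be affine in $f$. Granting this, write $w=P(f)Q(f)^{-1}$ with $P,Q\in\kk[t]$ coprime, $Q\ne0$, and $P\ne0$ (as $w\ne0$). Since $P(f)$ and $Q(f)$ commute, clearing denominators yields the $\ax$-identity $Q(f)b_2=P(f)b_1$; comparing degrees in $\ax$ (where $\deg P(f)=\deg_tP\cdot\deg f$, etc.) and using $\deg b_1=\deg b_2$ forces $\deg_tP=\deg_tQ$. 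Finally choose $A,B\in\kk[t]$ with $AP+BQ=1$; then
\[
b_2=A(f)P(f)b_2+B(f)Q(f)b_2=A(f)P(f)b_2+B(f)P(f)b_1=P(f)\bigl(A(f)b_2+B(f)b_1\bigr),
\]
so $P(f)$ left-divides $b_2$, forcing $\deg b_2\ge\deg P(f)$; if $\deg_tP\ge1$ this contradicts $\deg b_2<\deg f$. Hence $\deg_tP=0$, so $w\in\kk$ and $b_2\in\kk b_1$.

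I expect the centralizer identity $C_U(f)=\kk(f)$ for non-composite $f$ to be the main obstacle: one needs either a convenient reference for, or a direct proof of, the universal-field version of Bergman's centralizer theorem. Trying to stay inside $\ax$ does not really circumvent this: combining $w\in C_U(f)$ with the more elementary fact $C_{\ax}(f)=\kk[f]$ only shows that the right multiples of $b_1$ lying in $\cV$ are precisely the $p(f)b_1$, and upgrading this to a statement about all of $\cV$ --- equivalently, that $\cV$ has rank $1$ as a $\kk[f]$-module --- is exactly the point where non-compositeness of $f$ must be used in an essential way. Everything else --- the determinant comparison for $\alpha=1$, the degree bookkeeping, and the B\'ezout manipulation --- is routine.
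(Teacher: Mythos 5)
Your proof is correct and follows the paper's strategy almost exactly: the determinant comparison at generic matrix tuples to force $\alpha=1$, and the passage to the universal skew field of fractions where $b_2b_1^{-1}$ lands in the centralizer of $f$, are precisely the paper's two moves. The ``main obstacle'' you flag --- that the centralizer of a non-composite $f$ in the free field is $\kk(f)$ --- is exactly the input the paper uses, obtained by combining Bergman's theorem (the centralizer in $\ax$ is $\kk[f]$, \cite[Theorem 5.3]{Ber}) with \cite[Theorem 7.9.8 and Proposition 3.2.9]{Coh}; so that step is a citation, not a gap. The one place you genuinely diverge is the endgame after clearing denominators to get $Q(f)b_2=P(f)b_1$: the paper extracts a greatest common right divisor $b$ of $b_1,b_2$, observes that $q_1(f)$ is then stably associated to the right-coprime cofactor $c_1$, and invokes the continuant-based degree equality of Lemma \ref{l:cohn} to force $\deg c_1=0$. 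Your Bézout manipulation $b_2=P(f)\bigl(A(f)b_2+B(f)b_1\bigr)$ reaches the same conclusion more elementarily, trading the stable-association machinery for commutativity of $\kk[f]$ and multiplicativity of degree in the domain $\ax$; this is a small but real simplification of that final step. (Minor point: in the degenerate case $b_1=b_2=0$ the conclusion $\alpha=1$ genuinely fails, so the hypothesis must be read, as both you and the paper implicitly do, with $b_1,b_2$ nonzero.)
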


\begin{proof}
Since $f$ is not composite, its centralizer in $\ax$ equals $\kk[f]$ by \cite[Theorem 5.3]{Ber}. Therefore its centralizer in $\rx$, the universal skew field of fractions of $\ax$ (see \cite[Chapter 7]{Coh} for more information), equals $\kk(f)$ by \cite[Theorem 7.9.8 and Proposition 3.2.9]{Coh}. Since
$$b_1^{-1}fb_1=g=\alpha^{-1}b_2^{-1}fb_2,$$
we have $\det f(\gX^{(n)})=\alpha^{-n}\det f(\gX^{(n)})$ for large enough $n$ by Lemma \ref{l:det}, so $\alpha=1$ and $b_2b_1^{-1}\in\rx$ commutes with $f$. Hence there exist univariate coprime polynomials $q_1,q_2\in\kk[t]$ such that $b_2b_1^{-1}=q_1(f)^{-1}q_2(f)$, and consequently $q_1(f)b_2=q_2(f)b_1$. Let $b\in\ax$ be such that $b_i=c_ib$ for right coprime $c_1,c_2\in\ax$. Then
$$q_1(f)c_2=q_2(f)c_1,$$
so $q_1(f)$ and $c_1$ are stably associated. Therefore $\deg q_1(f)=\deg c_1$ by Lemma \ref{l:cohn}. Since the degree of $q_1(f)$ is either $0$ or at least $\deg f$, and $\deg c_1\le\deg b_1<\deg f$, we conclude $\deg c_1=0$. Hence $c_1,c_2$ are (nonzero) scalars.
\end{proof}

The proof of free Bertini's theorem is based on Bergman's centralizer theorem \cite{Ber}. While otherwise inherently different from ours, Stein's proof of (the special case of) classical Bertini's theorem in two commuting variables \cite{Ste} also uses ``centralizers'' with respect to the Poisson bracket on $\kk[t_1,t_2]$.

\begin{thm}\label{t:bertini}
Let $\bkk$ be the algebraic closure of a field $\kk$. The following are equivalent for $f\in\ax\!\setminus\kk$:
\begin{enumerate}[(i)]
	\item $f-\lambda$ factors in $\bx$ for infinitely many $\lambda\in\bkk$;
	\item $f-\lambda$ factors in $\bx$ for all $\lambda\in\bkk$;
	\item the centralizer of $f$ in $\ax$ is strictly larger than $\kk[f]$;
	\item $f$ is composite over $\kk$.
\end{enumerate}
\end{thm}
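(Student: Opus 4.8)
The plan is to prove the cycle of implications $(iv)\Rightarrow(ii)\Rightarrow(i)\Rightarrow(iii)\Rightarrow(iv)$. The implication $(iv)\Rightarrow(ii)$ is immediate: if $f=p(h)$ with $\deg p>1$, then over $\bkk$ the univariate polynomial $p(t)-\lambda$ splits off a linear factor $t-\mu$ for some $\mu\in\bkk$, giving $f-\lambda=(h-\mu)\cdot q(h)$ for a nonconstant $q\in\bkk[t]$; since $h$ is nonconstant, both factors are nonconstant, so $f-\lambda$ factors in $\bx$ for every $\lambda$. And $(ii)\Rightarrow(i)$ is trivial. The implication $(iii)\Rightarrow(iv)$ is exactly Bergman's centralizer theorem \cite[Theorem 5.3]{Ber}, already invoked in Lemma \ref{l:1}: if $f$ is not composite, its centralizer in $\ax$ is $\kk[f]$, so a strictly larger centralizer forces $f$ to be composite. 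Thus the real content is $(i)\Rightarrow(iii)$ (equivalently its contrapositive, $(i)\Rightarrow(iv)$ directly).

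For $(i)\Rightarrow(iii)$, assume toward a contradiction that $f$ is not composite, so by Bergman's theorem the centralizer of $f$ in $\ax$ is $\kk[f]$, and suppose $f-\lambda$ factors in $\bx$ for infinitely many $\lambda\in\bkk$. First pass to a finite extension: each factorization $f-\lambda=u_\lambda v_\lambda$ lives in $\ell\!\Langle\ulx\Rangle$ for some finite extension $\ell/\kk$, but by enlarging once we may not be able to capture infinitely many $\lambda$ at once, so instead we work directly over $\bkk$ and use Proposition \ref{p:berg} applied to $f$ viewed in $\bx$: $f$ is still not composite over $\bkk$ (compositeness is a linear condition insensitive to scalar extension, since $\kk[f]\subseteq Z_{\ax}(f)$ and the centralizer does not grow under field extension by the argument in Lemma \ref{l:1} using $\rx$), so its centralizer in $\bx$ is $\bkk[f]$. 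Now for each of the infinitely many $\lambda$, write $f-\lambda=u_\lambda v_\lambda$ with $u_\lambda,v_\lambda$ nonconstant of positive degree strictly less than $\deg f$. The irreducible factors of $u_\lambda$ (over $\bkk$) are, up to stable association, among finitely many classes? No — this is where the argument must be engineered. Instead, the cleaner route: consider the companion-type conjugation. From $f-\lambda=u_\lambda v_\lambda$ we get $v_\lambda(f-\lambda)=v_\lambda u_\lambda v_\lambda=(v_\lambda u_\lambda)v_\lambda$, hence $(f-\lambda)(v_\lambda u_\lambda)\cdot$? Let me instead exploit $u_\lambda v_\lambda = f-\lambda = v'_\mu u'_\mu$ for two distinct values: the standard trick is that $v_\lambda$ and $u_\mu$ satisfy a relation forcing a common factor, producing, after quotienting, a nonscalar element of the centralizer of (a stable-associate of) $f$, and Lemma \ref{l:1} then pins it down.

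I expect the crux — and the main obstacle — to be organizing the finitely-many-stable-associates bound (Proposition \ref{p:berg}) together with the infinitude of $\lambda$'s to extract two genuinely different factorizations that conjugate $f$ to itself, then invoking Lemma \ref{l:1} with its degree bound from Lemma \ref{l:cohn} to force the conjugating element into $\bkk[f]$, and finally converting the resulting polynomial identity $q_1(f)(f-\lambda)=(f-\mu)q_1(f)$-type relation into compositeness. Concretely: from $f-\lambda=u_\lambda v_\lambda$ one has $u_\lambda(f-\lambda)=u_\lambda v_\lambda u_\lambda=(f-\lambda)u_\lambda$ trivially, which is useless, so one instead sets $b_\lambda:=v_\lambda$ and notes $f\,v_\lambda = v_\lambda u_\lambda v_\lambda + \lambda v_\lambda = v_\lambda(u_\lambda v_\lambda+\lambda)= v_\lambda f$ — again trivial. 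The nontrivial move: pick $\lambda\ne\mu$, so $u_\lambda v_\lambda - u_\mu v_\mu=\mu-\lambda\ne0$, a nonzero constant; comparing the two factorizations of the "pencil" $f$, since $\ax$ is a fir one deduces (via the parallelogram / Ore-type lemma for comaximal factorizations, \cite[Chapter 3]{Coh}) that $u_\lambda$ and $v_\mu$ have a common right factor of controlled degree, and iterating over many $\lambda$ with the finiteness Proposition \ref{p:berg} forces a single irreducible $p$ dividing $f-\lambda$ on the right for infinitely many $\lambda$, whence $f-\lambda\in p\cdot\ax$ for infinitely many $\lambda$ gives $f\bmod p\cdot\ax$ not well-defined unless... — at which point one argues the right $\ax$-module $\ax/p\ax$ is finite-dimensional and $f$ acts on it with infinitely many eigenvalues $\lambda$, impossible. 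That contradiction is the finish; alternatively the centralizer $b_2b_1^{-1}=q_1(f)^{-1}q_2(f)$ bookkeeping of Lemma \ref{l:1} yields $f$ composite directly. The delicate point throughout is uniform degree control, supplied by Lemma \ref{l:cohn}, ensuring all the $g_i,b_i$ stay of degree $<\deg f$ so Lemma \ref{l:1} applies.
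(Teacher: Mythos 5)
Your overall architecture is the paper's: the three easy implications plus Bergman's centralizer theorem for (iii)$\Rightarrow$(iv), and for the crux (i)$\Rightarrow$(iii) a pigeonhole via Proposition \ref{p:berg} followed by Lemma \ref{l:1}. But the execution of (i)$\Rightarrow$(iii) has a genuine gap, and in fact you discard the one identity that makes it work. Writing $f-\lambda=p_\lambda q_\lambda$, the key relation is
$$fp_\lambda=(\lambda+p_\lambda q_\lambda)p_\lambda=p_\lambda(q_\lambda p_\lambda+\lambda),$$
which for $\lambda\neq0$ (using left/right coprimality of the two sides) shows $f$ is stably associated to $q_\lambda p_\lambda+\lambda$. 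Your computation of the analogous relation is wrong: $fv_\lambda=(u_\lambda v_\lambda+\lambda)v_\lambda\neq v_\lambda(u_\lambda v_\lambda+\lambda)$ in general, and the version you call ``useless'' is the useful one. With the correct relation in hand, one restricts to an infinite $\Lambda_0$ with $\lambda\neq0$ and $\deg p_\lambda$ constant, applies Proposition \ref{p:berg} to find distinct $\mu,\nu$ with $q_\nu p_\nu+\nu$ a scalar multiple of $q_\mu p_\mu+\mu$, and then Lemma \ref{l:1} (assuming $f$ not composite) forces $p_\nu\in\bkk p_\mu$. The contradiction is that $p_\mu q_\mu-p_\nu q_\nu=\nu-\mu$ is a nonzero constant while the left side would be $p_\mu$ times a polynomial, with $p_\mu$ nonconstant. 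None of this appears in your proposal.

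The concrete finish you do sketch is broken in two places. First, the finiteness of stable associates does not produce ``a single irreducible $p$ dividing $f-\lambda$ on the right for infinitely many $\lambda$'': the right factors $v_\lambda$ genuinely vary with $\lambda$ (consider $f=h^2$, $f-\lambda=(h-\sqrt\lambda)(h+\sqrt\lambda)$), and if some non-unit $p$ right-divided both $f-\lambda$ and $f-\mu$ it would right-divide the nonzero constant $\lambda-\mu$, an immediate contradiction with no module theory needed. Second, $\ax/p\ax$ is not finite-dimensional over the ground field (already for $p=x_1$ it has the infinitely many monomials not beginning with $x_1$ as a basis), so the ``finitely many eigenvalues'' argument does not apply. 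Finally, on descent: the pigeonhole argument naturally proves compositeness over $\bkk$, and the paper descends to $\kk$ at the end by noting that the space $U=\{p\colon \deg p<\deg f,\ p(0)=0,\ fp=pf\}$ is a nonzero linear space cut out by equations over $\kk$, hence meets $\ax$ nontrivially; your appeal to ``the centralizer does not grow under field extension'' is the right idea but needs this linear-algebra justification rather than a citation of Lemma \ref{l:1}.
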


\begin{proof}
Implications (iv)$\Rightarrow$(ii)$\Rightarrow$(i) are clear, and (iii)$\Rightarrow$(iv) is a restatement of \cite[Theorem 5.3]{Ber}. Thus it suffices to prove (i)$\Rightarrow$(iii).

Let $\Lambda\subseteq\bkk$ be an infinite set of $\lambda$ such that $f-\lambda$ factors in $\bx$. For each such $\lambda$ there exist nonconstant $p_\lambda,q_\lambda\in\bx$ such that $f-\lambda=p_\lambda q_\lambda$. Observe that
\begin{equation}\label{e:lam}
fp_\lambda=(\lambda+p_\lambda q_\lambda)p_\lambda=p_\lambda(q_\lambda p_\lambda+\lambda)
\end{equation}
for all $\lambda\in\Lambda$. Since $\deg p_\lambda<\deg f$ for all $\lambda\in\Lambda$, there exists an infinite subset $\Lambda_0\subseteq\Lambda\setminus\{0\}$ and $\de<\deg f$ such that $\deg p_\lambda=\de$ for all $\lambda\in\Lambda_0$. 
Furthermore, $\lambda+p_\lambda q_\lambda,p_\lambda$ are left coprime and $p_\lambda,q_\lambda p_\lambda+\lambda$ are right coprime whenever $\lambda\neq0$. Therefore $f$ and $q_\lambda p_\lambda+\lambda$ are stably associated for every $\lambda\in\Lambda_0$. By Proposition \ref{p:berg}, there are (up to a scalar multiple) only finitely many polynomials stably associated to $f$. Hence there exist distinct $\mu,\nu\in\Lambda_0$ such that $q_\nu p_\nu+\nu$ is a scalar multiple of $q_\mu p_\mu+\mu$. Suppose $f$ is not composite over $\bkk$. Then $p_\nu$ is be a scalar multiple of $p_\mu$ by \eqref{e:lam} and Lemma \ref{l:1}. However, this is impossible since
$$p_\mu q_\mu-p_\nu q_\nu=\nu-\mu\in\bkk\setminus\{0\}.$$
Therefore $f$ is composite over $\bkk$. In particular,
$$U:=\{p\in\bx\colon \deg p<\deg f,\ p(0)=0,\ fp-pf=0 \}\neq\{0\}.$$
But $U$ is a subspace given by equations over $\kk$, so $U\cap\ax\neq\{0\}$. Now (iii) follows because $U\cap\kk[f]=\{0\}$ and $U$ is contained in the centralizer of $f$ in $\ax$.
\end{proof}

A slightly stronger version holds for homogeneous polynomials.

\begin{cor}
Let $f\in\ax\setminus\kk$ be homogeneous. Then $f-1$ factors in $\bx$ if and only if $f=f_0^n$ for some $n>1$ and homogeneous $f_0\in\ax$.
\end{cor}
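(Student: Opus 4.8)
The plan is to deduce this from Theorem~\ref{t:bertini} together with the homogeneity hypothesis. First I would observe that if $f=f_0^n$ with $f_0$ homogeneous and nonconstant and $n>1$, then $f-1=f_0^n-1=\prod_{\zeta^n=1}(f_0-\zeta)$ in $\bx$, a product of $n\ge 2$ nonconstant factors, so $f-1$ factors in $\bx$; this settles the ``if'' direction. For the ``only if'' direction, suppose $f-1$ factors in $\bx$. I claim $f-\lambda$ factors in $\bx$ for every $\lambda\in\bkk\setminus\{0\}$: writing $\lambda=\mu^k$ for a suitable power matching $\deg f$ (more precisely, pick $\mu\in\bkk$ with $\mu^{\deg f}=\lambda$, which exists since $\bkk$ is algebraically closed), the substitution $x_i\mapsto \mu x_i$ is a $\kk$-algebra endomorphism of $\bx$ scaling a homogeneous polynomial of degree $e$ by $\mu^e$, so it sends $f-1$ to $\mu^{\deg f}f(\mu^{-1}\ulx)\cdots$ — more cleanly, $f(\mu\ulx)=\mu^{\deg f}f(\ulx)=\lambda f(\ulx)$, hence a factorization $f-1=pq$ pushes forward to $\lambda f - 1 = p(\mu\ulx)q(\mu\ulx)$, i.e. $f-\lambda^{-1}=\lambda^{-1}p(\mu\ulx)q(\mu\ulx)$ factors for all $\lambda\ne 0$. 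Thus $f-\lambda$ factors in $\bx$ for infinitely many $\lambda$, so by Theorem~\ref{t:bertini} (i)$\Rightarrow$(iv), $f$ is composite over $\kk$: $f=p(h)$ for some $h\in\ax$ and $p\in\kk[t]$ with $\deg p>1$.

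The remaining work is to upgrade ``composite'' to ``a homogeneous polynomial raised to a power''. Given $f=p(h)=\sum_{j} c_j h^j$, I would pass to the top-degree part. Since $f$ is homogeneous of degree $D:=\deg f$, and writing $h=h_e+(\text{lower terms})$ where $h_e$ is the leading homogeneous component of $h$ of degree $e:=\deg h$, the leading component of $p(h)$ is $c_N h_e^N$ where $N=\deg p$ and $c_N\ne 0$. Hence $D=eN$ and $f$ has a homogeneous component of degree $eN$, namely $c_N h_e^N$. To conclude $f$ itself equals (a scalar times) $h_e^N$, I would replace $h$ by its leading part: one expects that after subtracting a constant we may assume $p(0)=0$ and $h(0)=0$, and then an induction on the number of nonzero homogeneous components of $h$ shows that if $p(h)$ is homogeneous then $h$ must itself be homogeneous. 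Concretely, if $h=h_e+h'$ with $h'$ of strictly smaller degree and $h'\ne 0$, expanding $p(h)=\sum_j c_j(h_e+h')^j$ and collecting the unique term of second-highest degree shows it is nonzero (using that $\ax$ is a domain, so no cancellation of leading parts, and the derivative $p'(h_e)$ has nonzero leading part), contradicting homogeneity of $f$ unless $h'=0$. So $h=h_e$ is homogeneous, $f=c_N h_e^N$, and absorbing the scalar $c_N$ (and an $N$-th root of it, available in $\bkk$, but we want it in $\ax$ — here note the scalar lies in $\kk$ since $f,h_e\in\ax$ and comparing a single coefficient forces $c_N\in\kk$, though we only need $c_N\ne 0$ and may set $f_0 = (\text{$N$-th root of }c_N)\,h_e$ if $\kk$ contains such a root; if not, one still has $f = c_N h_e^N$ and can rescale $h_e$ to make it monic, whence $f_0$ with $f=f_0^N$) gives $f=f_0^N$ with $N=\deg p>1$.

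The main obstacle I anticipate is the scalar bookkeeping in the last step: producing $f_0\in\ax$ (not merely in $\bx$) with $f=f_0^n$ and $n>1$ requires care, since $p$ has coefficients in $\kk$ but the leading coefficient $c_N$ may not be an $n$-th power in $\kk$. The cleanest fix is to argue that the composite structure can be taken over $\kk$ with $h$ normalized so that its leading term has a monomial coefficient $1$ (rescaling $x$ or $h$ within $\ax$), and then to track that $f$, being homogeneous, forces $p$ to be a pure power of a linear polynomial: indeed, if $f=p(h)$ with $h$ homogeneous of degree $e$, then $f$ homogeneous of degree $eN$ means $p(h)=c_N h^N$ after discarding lower powers — but $p(h)-c_Nh^N$ would be a nonzero polynomial in $h$ of degree $<N$ in $h$ and hence of degree $<eN$ in $\ulx$ (as $\ax$ is a domain and $h$ is not a zero divisor), contradicting $f=p(h)$ being homogeneous of degree $eN$ unless all lower coefficients $c_0,\dots,c_{N-1}$ vanish. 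So $f=c_Nh^N$ outright, and setting $f_0:=c_N h^{\,N}\! /\! h^{\,N-1}$ is circular — instead just rescale: let $f_0$ be $h$ times a fixed $N$-th root of $c_N$ taken in $\bkk$, giving $f=f_0^N$ in $\bx$; but since $f\in\ax$ and $f_0^N=f$, comparing the (monomial-$1$-normalized) leading coefficients shows that root already lies in $\kk$, so $f_0\in\ax$ as required.
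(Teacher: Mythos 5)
Your route is the same as the paper's: rescale the variables ($x\mapsto\mu x$) to see that $f-\lambda$ factors in $\bx$ for every $\lambda\in\bkk\setminus\{0\}$, invoke Theorem~\ref{t:bertini} to obtain $f=p(h)$ with $N:=\deg p>1$ and $h\in\ax$, and then exploit homogeneity. For that last step the efficient version of your argument is the one you start with and then abandon: the degree-$(N\deg h)$ homogeneous component of $p(h)$ is $c_Nh_e^N\neq0$ (because $\ax$ is a domain), and since $f$ is homogeneous it \emph{equals} its top component, so $f=c_Nh_e^N$ outright. There is no need for the induction showing that $h$ itself is homogeneous; as sketched, that induction is also fragile in positive characteristic, where sums such as $\sum_{i+j=N-1}h_e^i h' h_e^j$ can vanish (take $h_e=x_1^2$, $h'=x_1$).

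The one genuine defect is your final scalar step. Comparing a normalized leading coefficient of $f=c_Nh_e^N$ only re-confirms $c_N\in\kk$; it does \emph{not} show that $c_N$ is an $N$-th power in $\kk$, and in general it is not. For $\kk=\mathbb{Q}$ and $f=2x_1^2$, the polynomial $f-1=(\sqrt2\,x_1-1)(\sqrt2\,x_1+1)$ factors in $\bx$, yet $2x_1^2\neq f_0^n$ for any $f_0\in\ax$ and $n>1$. What your argument correctly proves is $f=c\,f_0^n$ with $c\in\kk^{\times}$ and homogeneous $f_0\in\ax$, equivalently $f=f_0^n$ with homogeneous $f_0\in\bx$. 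This is really an imprecision in the statement (the paper's own proof dispatches the step with ``a power by homogeneity'' and does not address the scalar either), but you should not cover it with an incorrect claim: either conclude with $f_0\in\bx$ (or keep the scalar $c$ explicit), or assume $\kk$ closed under taking $n$-th roots.
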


\begin{proof}
If $f-1$ factors in $\bx$, then $f-\lambda^{\deg f}$ factors in $\bx$ for every $\lambda\in\bkk$ because it is up to a linear change of variables equal to $f(\lambda x)-\lambda^{\deg f}=\lambda^{\deg f}(f-1)$. Therefore $f$ is composite by Theorem \ref{t:bertini}, and furthermore a power by homogeneity.
\end{proof}

\section{\Eigs}\label{s4}

Throughout this section let $\kk$ be an algebraically closed field of characteristic $0$. Recall the definition of the \eig of $f$ at $\lambda$,
$$L_\lambda(f)=\bigcup_{n\in\N} \left\{X\in\mat{n}^d\colon \lambda\ \text{is an eigenvalue of}\ f(X)\right\}.$$
In the terminology of \cite{HKV,HKV1}, $L_\lambda(f)$ is the free locus of $f-\lambda$. Combined with existing irreducibility results for free loci of noncommutative polynomials \cite{HKV,HKV1}, free Bertini's theorem becomes a geometric statement about \eigs.

\begin{cor}\label{c:irr}
If $f\in\ax\!\setminus\kk$ is not composite, then there exists $N\in\N$ such that for all but finitely many $\lambda\in\kk$,
\begin{equation}\label{e:fl}
\left\{X\in\mat{n}^d\colon \lambda\ \text{is an eigenvalue of}\ f(X)\right\}
\end{equation}
is a reduced and irreducible hypersurface in $\mat{n}^g$ for all $n\ge N$. 
\end{cor}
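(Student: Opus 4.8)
The plan is to feed Free Bertini's theorem into the transfer principle of \cite{HKV,HKV1} relating irreducibility of a noncommutative polynomial to reduced irreducibility of its free locus for large matrix size. Since $f$ is nonconstant we have $\deg(f-\lambda)=\deg f\ge 1$ for every $\lambda\in\kk$, and since $\kk$ is algebraically closed, $\bkk=\kk$. As $f$ is not composite, Theorem \ref{t:bertini} furnishes a finite set $E\subset\kk$ such that $f-\lambda$ is irreducible over $\kk$ for all $\lambda\in\kk\setminus E$.

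Next I would invoke the result of \cite{HKV,HKV1} in the following degree-uniform form: there is an integer $N$, depending only on $d$ and $\deg f$, such that for every irreducible $p\in\ax$ with $\deg p=\deg f$ and every $n\ge N$, the polynomial $\det p(\gX^{(n)})$ is, up to a nonzero scalar, an irreducible commutative polynomial; equivalently, $\{X\in\mat{n}^d\colon \det p(\gX^{(n)})=0\}$ is a reduced irreducible hypersurface in $\mat{n}^d$. (Non-vanishing to a constant is the content of Lemma \ref{l:det}, which I would likewise use with a threshold depending only on $\deg f$.) Applying this with $p=f-\lambda$ for each $\lambda\in\kk\setminus E$, and noting that $L_\lambda(f)\cap\mat{n}^d=\{X\colon\det(f(X)-\lambda I)=0\}$ is exactly the zero set of $\det(f-\lambda)(\gX^{(n)})$, yields the corollary with this $N$ and exceptional set $E$.

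The delicate point is the uniformity of the threshold $N$ over $\lambda$: the set of good values $\lambda$ is infinite, so one cannot take a maximum of per-polynomial thresholds. This is what forces me to use the \cite{HKV,HKV1} transfer in a form where the matrix-size bound depends only on the degree (and on $d$) rather than on the individual polynomial — legitimate here because every $f-\lambda$ has degree $\deg f$ — and to check that no further values of $\lambda$ need to be discarded at this step, i.e. that the transfer applies to \emph{every} irreducible polynomial of the relevant degree without additional exceptions. Extracting (or citing) this degree-uniform statement is the only input beyond Theorem \ref{t:bertini}; the rest is bookkeeping.
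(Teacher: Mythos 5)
Your outline coincides with the paper's: apply Theorem \ref{t:bertini} to get irreducibility of $f-\lambda$ for all but finitely many $\lambda$, then transfer to reduced irreducibility of the hypersurface $\{\det(f(X)-\lambda I)=0\}$ via \cite[Theorem 4.3]{HKV}, with the only delicate point being that the matrix-size threshold must be uniform over the infinite family $\{f-\lambda\}_\lambda$. You identify that crux correctly, but the way you discharge it is not what the cited results supply. You posit a transfer principle whose threshold $N$ depends only on $d$ and $\deg f$, i.e.\ uniform over \emph{all} irreducible polynomials of a given degree; \cite[Theorem 4.3]{HKV} is stated with a threshold $N_\lambda$ depending on the individual polynomial, and no degree-uniform version is asserted there. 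The paper instead exploits a much weaker (and actually available) uniformity: the polynomials $f-\lambda$ all share the same non-constant part, and by \cite[Remark 3.5 and proof of Lemma 4.2]{HKV} the threshold produced there depends only on data insensitive to perturbing the constant term, so a single $N$ works for the whole family. Your degree-uniform statement is plausible (the relevant bounds in \cite{HKV} ultimately run through linearizations whose size is controlled by $d$ and the degree), but it is a strictly stronger claim that you neither prove nor locate in the literature --- and you say as much yourself. As written, the key step is assumed rather than established; replacing ``uniform in the degree'' by ``uniform under change of the constant term,'' with the specific references above, closes the gap and recovers the paper's argument. (A cosmetic point: the paper also removes $\lambda=f(0)$ from the good set so that $f-\lambda$ has nonzero constant term, as required for the free-locus machinery; this is harmlessly absorbed into the finitely many exceptions.)
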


\begin{proof}
By Theorem \ref{t:bertini}, there is a cofinite subset $\Lambda$ of $\kk\setminus\{f(0)\}$ such that $f-\lambda$ is irreducible over $\kk$ for $\lambda\in\Lambda$. By \cite[Theorem 4.3]{HKV} for each $\lambda\in\Lambda$ there exists $N_\lambda\in\N$ such that the hypersurface \eqref{e:fl} is reduced and irreducible for every $n\ge N_\lambda$. However, since polynomials $f-\lambda$ for $\lambda\in\kk$ only differ in the constant part, it follows by \cite[Remark 3.5 and proof of Lemma 4.2]{HKV} that one can choose $N=N_\lambda$ independent of $\lambda$.
\end{proof}

\begin{rem}\label{r:uni}
Let $p_1,p_2\in\kk[t]$. Then $p_2\in\kk[p_1]$ if and only if for every $\lambda_1\in\kk$ there exists $\lambda_2\in\kk$ such that every zero of $p_1-\lambda_1$ is a zero of $p_2-\lambda_2$. Indeed, $p_1-\lambda_1$ has only simple zeros for infinitely many $\lambda_1$, in which case $\{p_1-\lambda_1=0\}\subseteq \{p_2-\lambda_2=0\}$ implies that $p_1-\lambda_1$ divides $p_2-\lambda_2$. Then the claim follows from the division algorithm in $\kk[t]$ by induction on $\deg p_2$.
\end{rem}

\begin{thm}\label{t:eig}
For $f,g\in\ax$ the following are equivalent:
\begin{enumerate}[(i)]
\item each \eig of $f$ is contained in an \eig of $g$;
\item there exist $p\in\kk[t]$ and nonzero $a,h\in\ax$ such that $g=p(h)$ and $fa=ah$.
\end{enumerate}
\end{thm}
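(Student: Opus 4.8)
The plan is to prove the nontrivial direction (i)$\Rightarrow$(ii), since (ii)$\Rightarrow$(i) should follow quickly: if $fa=ah$, then by induction $f^k a = a h^k$, so $q(f)a = a q(h)$ for every $q\in\kk[t]$; taking $q=t-\lambda$ we see that whenever $\lambda$ is an eigenvalue of $h(X)$ with eigenvector $v\neq 0$ such that $a(X)v\neq 0$, then $\lambda$ is an eigenvalue of $f(X)$. One must handle the degenerate case $a(X)v=0$; this is where irreducibility of the \eig (Corollary \ref{c:irr}) enters, as the locus where $a(X)$ is singular is a proper subvariety of the (irreducible) hypersurface \eqref{e:fl} for $g=h$, hence $L_\lambda(h)\subseteq L_\lambda(f)$ for all but finitely many $\lambda$, and then for all $\lambda$ by a closure/specialization argument; since $g=p(h)$, each \eig of $g$ is a finite union of \eigs of $h$, giving (i).

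For (i)$\Rightarrow$(ii), first reduce to the case where $g$ is not composite: factor $g=p(g_0)$ with $g_0$ not composite and $p\in\kk[t]$ of maximal degree (this is possible and essentially unique by Bergman's centralizer theorem, cf.\ \cite{Ber} and the structure used in the proof of Theorem \ref{t:bertini}), so that by Remark \ref{r:uni} each \eig of $g$ is a finite union of \eigs of $g_0$; thus condition (i) for the pair $(f,g)$ is equivalent to: for every $\lambda_1$ there is $\lambda_2$ with $L_{\lambda_1}(f)\subseteq L_{\lambda_2}(g_0)$. It therefore suffices to produce $a$ with $fa=ag_0$ (then $g=p(g_0)$ furnishes (ii) with $h=g_0$). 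Similarly one may assume $f$ is not composite: write $f=r(f_0)$ with $f_0$ not composite; an \eig of $f$ contains an \eig of $f_0$ (again Remark \ref{r:uni}), so (i) forces each \eig of $f_0$ into an \eig of $g_0$, and one reduces to the pair $(f_0,g_0)$; an intertwiner $f_0 a = a g_0$ then needs to be promoted to $f a = a g_0$ — which it is not in general, but one recovers the composite $p$ in (ii) by noting $r(f_0)a = a\, r(g_0)$, and then showing $r(g_0)$ and $g$ have the same \eigs, so by the not-composite case applied symmetrically they are related, ultimately yielding the claimed $p$. (This bookkeeping is the fiddly part and I would isolate it as a lemma: same \eigs for non-composite polynomials implies stable association of $f-\lambda$ and $g-\lambda$ generically, hence $\deg f=\deg g$ and an intertwiner exists.)

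The heart of the argument, with $f,g$ both not composite and every \eig of $f$ inside an \eig of $g$: by Corollary \ref{c:irr} there is $N$ such that for all but finitely many $\lambda$, the $n\times n$ slice of $L_\lambda(f)$ is an irreducible hypersurface $\{\det(f(\gX^{(n)})-\lambda)=0\}$ for $n\ge N$; pick such a $\lambda_1$ and the corresponding $\lambda_2$ with $L_{\lambda_1}(f)\subseteq L_{\lambda_2}(g)$. Irreducibility forces the polynomial $\det(f(\gX^{(n)})-\lambda_1)$ to divide $\det(g(\gX^{(n)})-\lambda_2)$ for all large $n$, and a degree count (the determinant of a generic matrix of noncommutative polynomials has degree $n\deg$) gives $\deg f=\deg g$ and that the quotient is a unit, i.e.\ the two loci coincide for these $\lambda$; running over infinitely many $\lambda_1$ and matching up, one gets that $f-\lambda$ and $g-\sigma(\lambda)$ are stably associated for infinitely many $\lambda$, where $\sigma$ is a bijection which a Remark \ref{r:uni}-type argument forces to be affine, $\sigma(\lambda)=\lambda$ after normalizing constants. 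Stable association (Lemma \ref{l:cohn}) then yields, for each such $\lambda$, nonzero $a_\lambda,b_\lambda$ of degree $<\deg f$ with $(f-\lambda)a_\lambda = b_\lambda(g-\lambda)$. Expanding and comparing: $fa_\lambda - a_\lambda g = \lambda a_\lambda - \lambda b_\lambda + (b_\lambda - a_\lambda)g$; the subspace $\{(a,b): \deg<\deg f,\ fa - ag = \lambda a - \lambda b, \ \dots\}$ depends on $\lambda$ only through finitely many linear conditions, so infinitely many $\lambda$ pin down a common nonzero solution with $b=a$, giving $fa=ag$. The main obstacle I anticipate is precisely this last passage — extracting a \emph{single} intertwiner $a$ valid for all $\lambda$ out of the family $a_\lambda$ — which requires a careful finiteness/dimension argument (bounded degree, plus using that the loci genuinely coincide rather than merely contain one another, so the roles of $a_\lambda$ and $b_\lambda$ are symmetric) together with Lemma \ref{l:1} to rule out the proportionality ambiguity.
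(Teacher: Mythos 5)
Your overall strategy coincides with the paper's: reduce to non-composite $h_1,h_2$ via $f=p_1(h_1)$, $g=p_2(h_2)$, use Corollary \ref{c:irr} to upgrade containments of irreducible eigenlevel sets to equalities, normalize so that $h_1-\lambda$ and $h_2-\lambda$ are stably associated for infinitely many $\lambda$, and invoke Lemma \ref{l:cohn} for bounded-degree witnesses $(h_1-\lambda)a_\lambda=b_\lambda(h_2-\lambda)$. (As an aside, for (ii)$\Rightarrow$(i) you can skip the eigenvector case analysis: by Lemma \ref{l:det}, $\det a(\gX^{(n)})\neq 0$ for large $n$, so $fa=ah$ yields the polynomial identity $\det(f(\gX^{(n)})-\lambda I)=\det(h(\gX^{(n)})-\lambda I)$ and hence $L_\lambda(f)=L_\lambda(h)$ outright; your route would moreover need non-compositeness to apply Corollary \ref{c:irr}, which is not given.)

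The genuine gap is the final step, exactly where you anticipated trouble. From the existence of a nonzero solution $(a_\lambda,b_\lambda)$ of the pencil of linear systems for infinitely many $\lambda$ you cannot conclude that there is a \emph{common} nonzero solution, let alone one with $b=a$: a pencil $\Phi-\lambda\Psi$ can have a one-dimensional kernel for every $\lambda$ while $\bigcap_\lambda\ker(\Phi-\lambda\Psi)=\{0\}$. Worse, the conclusion you want is provably false inside the space you work in: for $f=x_1+x_2+x_1x_2^2$ and $g=x_1+x_2+x_2^2x_1$ (the paper's example) the eigenlevel sets coincide, yet there is no $b\in\ax$ with $fb=bg$ and $\deg b\le\deg f$; the genuine intertwiner has degree $4>\deg f=3$. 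So the intertwiner must be allowed to leave the bounded-degree space in which the $a_\lambda$ live, and Lemma \ref{l:1} cannot rescue the argument, since it presupposes relations $fb_i=b_ig$ --- precisely what you are trying to produce. The correct extraction is: generic solvability of the parametrized linear system gives nonzero $A,B\in\kk[t]\otimes\ax$ of $x$-degree less than $\de$ with $(h_1-t)A=B(h_2-t)$; then right-divide $A$ by the ($t$-monic) element $t-h_2$, setting $a:=A-C(t-h_2)\in\ax$ and $b:=B-(t-h_1)C$ --- note this subtraction \emph{raises} the $x$-degree, which is why no degree-bounded common solution need exist --- and finally compare degrees in $t$ in $(h_1-t)a=b(h_2-t)$ to force $b\in\ax$ and then $a=b$. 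Without this (or an equivalent) device, the proof does not close.
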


\begin{proof}
(ii)$\Rightarrow$(i) By Lemma \ref{l:det},
$$h(\gX^{n})=a(\gX^{n})^{-1}f(\gX^{n})a(\gX^{n})$$
for all large enough $n$, and thus
$$\det(h(\gX^{n})-\lambda I)=\det(f(\gX^{n})-\lambda I)$$
for all $\lambda\in\kk$ and $n\in\N$. Hence $L_\lambda(f)=L_\lambda(h)$ for all $\lambda\in\kk$. Since every univariate polynomial over $\kk$ factors into linear factors, each \eig of $f$ is contained in an \eig of $p(h)$.

(i)$\Rightarrow$(ii) Assume that $f,g$ are nonconstant. Then $f=p_1(h_1)$ and $g=p_2(h_2)$ for some $p_1,p_2\in\kk[t]$ and non-composite $h_1,h_2\in\ax$ with $h_1(0)=0=h_2(0)$. By Corollary \ref{c:irr} there is a cofinite set $\Lambda\subseteq\kk$ such that $L_\lambda(h_1)\cap \mat{n}^d$ and $L_\lambda(h_2)\cap \mat{n}^d$ are reduced and irreducible hypersurfaces for all $\lambda\in\Lambda$ and large enough $n\in\N$. Since \eigs of $f$ are contained in \eigs of $g$, there are infinitely many pairs $(\lambda_1,\lambda_2)\in\Lambda^2$ such that $L_{\lambda_1}(h_1)=L_{\lambda_2}(h_2)$. By comparing
$$\det(h_1(\gX^{(n)})-\lambda_1 I),\quad \det(h_2(\gX^{(n)})-\lambda_2 I)$$
one can replace $h_2$ with $\alpha h_2+\beta$ for some $\alpha\in\kk\setminus\{0\}$ and $\beta\in\kk$ (and change $p_2$ accordingly) so that
\begin{equation}\label{e:det}
\det(h_1(\gX^{(n)})-\lambda I)=\det(h_2(\gX^{(n)})-\lambda I)
\end{equation}
for all $\lambda\in\kk$ and $n\in\N$. By \cite[Theorem 4.3]{HKV}, $h_1-\lambda$ and $h_2-\lambda$ are stably associated for all $\lambda\in \Lambda$. Let $\de=\deg h_1$. By Lemma \ref{l:cohn} there exist nonzero $a_\lambda,b_\lambda\in \ax$ of degree less than $\de$ for $\lambda\in \Lambda$ such that
\begin{equation}\label{e:sa}
(h_1-\lambda)a_\lambda=b_\lambda (h_2-\lambda).
\end{equation}
Since \eqref{e:sa} is a linear system in $(a_\lambda,b_\lambda)$ with a rational parameter $\lambda$, there exist nonzero $A,B\in \kk[t]\otimes \ax$ of degree (with respect to $x$) less than $\de$ such that
$$(h_1-t)A=B(h_2-t).$$
By looking at the degree of $A$ with respect to $t$ one can find $C\in \kk[t]\otimes \ax$ such that $a:=A-C(t-h_2)\in\ax$. Note that $a\neq0$ since $\deg A<\de=\deg h_2$. Letting $b:=B-(t-h_1)C$ we obtain
\begin{equation}\label{e:sa1}
(h_1-t)a=b(h_2-t).
\end{equation}
By comparing degrees with respect to $t$ in \eqref{e:sa1} we get $b\in\ax$ and consequently $a=b$. For $h:=p_1(h_2)$ we thus have
$$fa=p_1(h_1)a=ap_1(h_2)=ah.$$
Finally, since for every $\lambda_1\in\kk$ there exists $\lambda_2\in\kk$ such that
$$L_{\lambda_1}(p_1(h_2))=L_{\lambda_1}(h)=L_{\lambda_1}(f)\subseteq L_{\lambda_2}(g)=L_{\lambda_2}(p_2(h_2))$$
and $\det h_2(\gX^{(n)})$ is nonconstant for large $n$ by Lemma \ref{l:det}, Remark \ref{r:uni} implies $p_2=p\circ p_1$ for some $p\in\kk[t]$.
\end{proof}

\begin{cor}
Let $f,g\in\ax$. Then \eigs of $f$ and $g$ coincide if and only if there is a nonzero $a\in\ax$ such that $fa=ag$.
\end{cor}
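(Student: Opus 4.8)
The plan is to derive this corollary from Theorem \ref{t:eig} by applying it in both directions. First I would observe that ``\eigs of $f$ and $g$ coincide'' means precisely that every \eig of $f$ is contained in an \eig of $g$ \emph{and} every \eig of $g$ is contained in an \eig of $f$. So applying Theorem \ref{t:eig} to the pair $(f,g)$ gives nonzero $a_1,h_1\in\ax$ and $p_1\in\kk[t]$ with $g=p_1(h_1)$ and $fa_1=a_1h_1$; applying it to the pair $(g,f)$ gives nonzero $a_2,h_2\in\ax$ and $p_2\in\kk[t]$ with $f=p_2(h_2)$ and $ga_2=a_2h_2$. The converse direction is easier: if $fa=ag$ with $a\neq0$, then by Lemma \ref{l:det} we have $g(\gX^{n})=a(\gX^{n})^{-1}f(\gX^{n})a(\gX^{n})$ for all large $n$, so $\det(f(\gX^{n})-\lambda I)=\det(g(\gX^{n})-\lambda I)$ for all $\lambda$ and all $n$ (the polynomial identity holds for large $n$ hence for all $n$), whence $L_\lambda(f)=L_\lambda(g)$ for every $\lambda$.

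For the forward direction, the task is to upgrade the two one-sided conclusions into a single conjugating polynomial $a$ with $fa=ag$. The cleanest route is to reduce, as in the proof of Theorem \ref{t:eig}, to the non-composite case: write $f=p(h_f)$ and $g=q(h_g)$ with $h_f,h_g$ non-composite and $h_f(0)=h_g(0)=0$. Coinciding \eigs together with Corollary \ref{c:irr} force (after replacing $h_g$ by $\alpha h_g+\beta$) the determinantal identity $\det(h_f(\gX^{n})-\lambda I)=\det(h_g(\gX^{n})-\lambda I)$ for all $\lambda$ and $n$, so by \cite[Theorem 4.3]{HKV} the polynomials $h_f-\lambda$ and $h_g-\lambda$ are stably associated for cofinitely many $\lambda$, and the linear-system argument of Theorem \ref{t:eig} produces a nonzero $a\in\ax$ with $h_f a=a h_g$. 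Moreover, the same argument applied with the roles of $f,g$ interchanged, combined with Remark \ref{r:uni}, forces $p$ and $q$ to generate the same subalgebra of $\kk[t]$, i.e.\ $\deg p=\deg q$ and $p=q\circ(\alpha t+\beta)$; after absorbing the affine change into $h_g$ one gets $p=q$, and then $fa=p(h_f)a=a\,p(h_g)=ag$. The degenerate cases where $f$ or $g$ is constant are trivial: coinciding \eigs then force both to be the same constant $\lambda_0$, and $a=1$ works.

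The main obstacle I anticipate is the bookkeeping of the affine reparametrizations: the one-sided Theorem \ref{t:eig} only delivers $g=p_1(h_1)$ with $h_1$ built from an arbitrary stably associated representative, and there is freedom to replace $h$ by $\alpha h+\beta$ at several points, so one must check that these normalizations can be made consistently in both directions at once. Concretely, after normalizing so that $\det(h_f(\gX^{n})-\lambda I)=\det(h_g(\gX^{n})-\lambda I)$, one needs that the outer univariate polynomials match \emph{on the nose} rather than up to an affine substitution; this is exactly where the two-sided hypothesis is used, via Remark \ref{r:uni} applied symmetrically, to conclude $\kk[p\circ h_g]=\kk[q\circ h_g]$ hence $p=q$. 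Everything else is a routine reuse of the machinery already assembled in the proof of Theorem \ref{t:eig}, so I would keep the write-up short, citing that proof for the linear-system step rather than repeating it.
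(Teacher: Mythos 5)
Your overall strategy---reduce to non-composite $h_1,h_2$ via the machinery of Theorem \ref{t:eig}, extract $h_1a=ah_2$ from the linear-system argument, and then match the outer univariate polynomials---is exactly the paper's, and your converse direction is correct. But the last step of the forward direction has a genuine gap, which traces back to your reading of ``coincide''. You take it to mean mutual containment in the sense of Theorem \ref{t:eig}(i), i.e.\ for every $\lambda_1$ there is \emph{some} $\lambda_2$ with $L_{\lambda_1}(f)\subseteq L_{\lambda_2}(g)$ and vice versa. Under that reading the statement is false: for $f=x_1$ and $g=x_1+1$ the two collections of \eigs coincide (since $L_\lambda(g)=L_{\lambda-1}(f)$), yet $fa=ag$ would force $L_\lambda(f)=L_\lambda(g)$ for the \emph{same} $\lambda$ by your own converse argument, and this fails at $\lambda=0$ because $\{\det X_1=0\}\neq\{\det(X_1+I)=0\}$. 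Correspondingly, the symmetric application of Remark \ref{r:uni} only yields $\kk[p_1]=\kk[p_2]$, hence $p_2=\alpha p_1+\beta$; this is a degree-one \emph{post}-composition, which cannot be absorbed into $h_2$ (replacing $h_2$ by an affine image only handles pre-compositions $p_2=p_1\circ(\alpha t+\beta)$). So the step ``$\kk[p]=\kk[q]$ hence $p=q$'' is a non sequitur, and with only $p_2=\alpha p_1+\beta$ you end up with $(\alpha f+\beta)a=ag$ rather than $fa=ag$.

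The fix is to use the intended (and only tenable) meaning of ``coincide'', namely $L_\lambda(f)=L_\lambda(g)$ for each fixed $\lambda$. After the normalization $\det(h_1(\gX^{(n)})-\lambda I)=\det(h_2(\gX^{(n)})-\lambda I)$ one then has
$$L_\lambda(p_1(h_1))=L_\lambda(f)=L_\lambda(g)=L_\lambda(p_2(h_2))=L_\lambda(p_2(h_1))$$
for every $\lambda$, so $p_1-\lambda$ and $p_2-\lambda$ have the same zero sets for every $\lambda$; as in Remark \ref{r:uni} this forces $p_1=p_2$ on the nose, and then $fa=p_1(h_1)a=ap_1(h_2)=ag$. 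This is precisely how the paper concludes; the rest of your write-up (the reduction, the linear-system step, and the degenerate constant cases) is fine as is.
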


\begin{proof}
If \eigs of $f$ and $g$ coincide, then $f=p_1(h_1)$, $g=p_2(h_2)$ and $h_1a=ah_2$ for $0\neq a,h_1,h_2\in\ax$ as in the proof of Theorem \ref{t:eig}. Furthermore,
$$L_\lambda (p_1(h_1))=L_\lambda (p_2(h_2))=L_\lambda (p_2(h_1))$$
implies $p_1=p_2$ and therefore $fa=ag$. For the converse see the proof of (ii)$\Rightarrow$(i) in Theorem \ref{t:eig}.
\end{proof}

\begin{exa}
Let
$$f=x_1+x_2+x_1x_2^2,\quad g=x_1+x_2+x_2^2x_1,\quad a=1+x_1^2+x_1x_2+x_2x_1+x_1x_2^2x_1.$$
Then $fa=ag$, so \eigs of $f$ and $g$ coincide. Note that $\deg a>\deg f$. While
$$f(1+x_2x_1)=(1+x_1x_2)g$$
holds, which complies with Lemma \ref{l:cohn}, there is no $b\in\ax$ such that $fb=bg$ and $\deg b\le\deg f$.
\end{exa}

\section{\Wqc polynomials}\label{s5}

On the free $\R$-algebra $\px$ there is a unique involution $*$ satisfying $x_j^*=x_j$. A noncommutative polynomial $f\in\px$ is {\bf symmetric} if $f^*=f$. Let $\all=\bigcup_{n\in\N}\sm{n}^d$. Then $f$ is symmetric if and only if $f(X)\in\mathbb{S}^1$ for all $X\in\all$. By $A\succ0$ (resp. $A\succeq0$) we denote that $A\in\mathbb{S}^1$ is positive definite (resp. semidefinite).

Let $f\in\px$ be symmetric. As in \cite{HM} (cf. \cite{HKMV}) we define its {\bf positivity domain},
$$\cD(f)=\bigcup_{n\in\N} \cD_n(f)$$
where $\cD_n(f)$ is the closure of the connected component of
$$\{X\in\sm{n}^d\colon f(X) \succ0 \}$$
containing the origin $0^d\in\sm{n}^d$. It is known \cite{HM} that $\cD(f)$ is convex (i.e., $\cD_n(f)$ is convex for all $n\in\N$) if and only if $\cD(f)$ is the solution set of a linear matrix inequality. We will require the following version of \cite[Theorem 1.5]{HKMV}.

\begin{prop}\label{p:hkmv}
Let $f\in\px$ be symmetric and irreducible over $\C$, with $f(0)=0$. If $\cD(1-f)$ is proper and convex, then
\begin{equation}\label{e:conc}
f=\ell_0+\ell_1^2+\cdots+\ell_m^2
\end{equation}
for some linear $\ell_0,\dots,\ell_m\in\px$.
\end{prop}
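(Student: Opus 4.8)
The plan is to derive Proposition~\ref{p:hkmv} from \cite[Theorem~1.5]{HKMV}. Applied to $p:=1-f$ and combined with the Helton--McCullough LMI description of convex free semialgebraic sets recalled above, that result should yield that a symmetric $f$ with $f(0)=0$ whose positivity domain $\cD(1-f)$ is proper and convex is necessarily of the form $f=p(\ell_0+\ell_1^2+\cdots+\ell_m^2)$ for some $p\in\R[t]$ and linear $\ell_0,\dots,\ell_m\in\px$. (The competing ``$-f$ is a sum of hermitian squares'' outcome is incompatible with properness: it would force $\cD_n(1-f)=\sm{n}^d$ for every $n$.) Thus everything reduces to showing $\deg p=1$ and then to a short normalization.

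For the degree reduction I would use the elementary fact that a composite polynomial is reducible over the algebraic closure: if $g=q(h)$ with $q\in\kk[t]$, $\deg q>1$, and $h$ nonconstant, then $q=c\prod_{i=1}^{\deg q}(t-r_i)$ over $\bkk$ exhibits $g=c\prod_i(h-r_i)$ as a product of $\deg q\ge2$ nonconstant polynomials. Hence an $f$ irreducible over $\C$ is not composite, so in $f=p(\ell_0+\ell_1^2+\cdots+\ell_m^2)$ we must have $\deg p\le1$; and $\deg p=0$ is impossible, as it would force $f\equiv0$ and $\cD(1-f)=\all$, which is not proper. Therefore $f=ah+b$ with $a\ne0$, where $h:=\ell_0+\ell_1^2+\cdots+\ell_m^2$.

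It remains to normalize. Evaluating at the origin and using $f(0)=0=h(0)$ gives $b=0$. If the quadratic part of $h$ vanishes then $f=a\ell_0$ is already linear; otherwise $a>0$, for if $a<0$ then $1-f=1+|a|h$ is a ``concave-up'' perturbation of the positive constant $1$, and one checks that along a suitable line through the origin (one lying in the kernel of $\ell_0$ when $d\ge2$, or a scalar slice when $d=1$) the value $1-f$ stays positive definite, so $\cD(1-f)$ either contains an affine line, or equals $\all$, or is non-convex at the scalar level --- each contradicting the hypotheses. With $a>0$ one absorbs $\sqrt a$ into the linear forms to get $f=(a\ell_0)+\sum_{j=1}^m(\sqrt a\,\ell_j)^2$, which is of the asserted shape.

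The step I expect to be genuinely delicate is the first one, namely invoking \cite[Theorem~1.5]{HKMV} with its hypotheses verified: that result is presumably phrased in terms of the irreducible (or minimal) defining polynomial of the convex set, whereas our assumption is irreducibility of $f$ itself, and $f$ irreducible over $\C$ does \emph{not} force $1-f$ irreducible over $\C$. Bridging this is where the real work lies. The idea would be to use the free-locus machinery of \cite{HKV,HKV1}: the boundary of $\cD_n(1-f)$ lies in $\{X\in\sm{n}^d\colon\det(1-f(X))=0\}$, and one compares its irreducible components with those arising from the minimal monic defining pencil of $\cD(1-f)$, controlling stable association via Proposition~\ref{p:berg} and Lemma~\ref{l:cohn} and using that $1-f$ is symmetric, so the involution permutes its irreducible factors. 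This should show that, up to stable association, $1-f$ splits into a single boundary-cutting irreducible factor together with factors that are positive on $\cD(1-f)$, and irreducibility of $f$ then rules out the latter. Making this precise and aligning it with the exact statement of \cite[Theorem~1.5]{HKMV} is the crux of the argument.
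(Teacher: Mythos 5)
There is a genuine gap, and it sits exactly where you flag ``the crux'': the invocation of \cite[Theorem 1.5]{HKMV} is never actually carried out, and the form you guess for its conclusion is not what that theorem gives. Convexity of the single sublevel set $\cD(1-f)$ does \emph{not} imply $f=p(\ell_0+\ell_1^2+\cdots+\ell_m^2)$ for a univariate $p$ --- the paper itself points out (final remark of Section 5) that there are reducible symmetric polynomials with an irreducible factor of degree at least $3$ whose positivity domain is convex, and such polynomials cannot have this composite form. The composite shape is the conclusion of Theorem \ref{t:c}, which needs convexity for a whole range of levels $\lambda$ plus free Bertini; you cannot extract it from a single convex level set. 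Consequently your subsequent degree reduction, while correct as a piece of algebra (composite $\Rightarrow$ reducible over $\overline{\kk}$), is built on an unestablished premise. Moreover, the difficulty you diagnose --- that Theorem 1.5 might be phrased in terms of the minimal defining polynomial of the set rather than of $f$ --- is not the real one: the hypothesis of \cite[Theorem 1.5]{HKMV} is irreducibility of the polynomial $f$ itself (with $f(0)=0$ and $\cD(1-f)$ proper and convex), so no free-locus/stable-association bridge between $f$ and $1-f$ is needed.

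The genuine work, which your proposal does not address, is the mismatch of settings: \cite[Theorem 1.5]{HKMV} lives in the complex free $*$-algebra $\py$ with $y_j^*\neq y_j$, whereas $f$ lives in $\px$ with the involution fixing the variables. The paper's proof forms $\tf=f(y_1+y_1^*,\dots,y_d+y_d^*)$, checks that $\tf$ is hermitian and irreducible in $\py$, transfers properness and convexity of the positivity domain via the standard embedding of hermitian $n\times n$ matrices into symmetric $2n\times 2n$ matrices, applies Theorem 1.5 to get $\tf=\tell_0+\sum_k\tell_k^*\tell_k$, and then descends by substituting $y=y^*=x/2$, using symmetry of $f$ to see that the commutator terms $i[\re\tell_k,\im\tell_k]$ cancel and the result is a genuine sum of squares in $\px$. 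Your normalization paragraph (sign of the leading coefficient, absorbing $\sqrt a$) is then unnecessary in the paper's route, and as written it is also shaky: a convex proper $\cD(1-f)$ may well contain affine lines, so that alone is not a contradiction.
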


\def\tell{\tilde{\ell}}
\def\tf{\tilde{f}}

\begin{proof}
Let $\uly=(y_1,\dots,y_d)$ and $\uly^*=(y_1^*,\dots,y_d^*)$ be freely noncommuting variables, and consider $\py$ with the $\R$-linear involution $*$ sending $y_j$ to $y_j^*$ and acting on $\C$ as the complex conjugate. Since $f\in\px$ is symmetric and irreducible over $\C$, the noncommutative polynomial $\tf:=f(y_1+y_1^*,\dots,y_d+y_d^*)\in\py$ is hermitian and irreducible in $\py$. The positivity domain of $1-\tf$ (see \cite{HKMV}) is the union over $n\in\N$ of closures of connected components of
$$\{(Y,Y^*)\in\mc{n}^d\times\mc{n}^d\colon I-f(Y_1+Y_1^*,\dots,Y_d+Y_d^*)\succ0 \}$$
containing the origin. Furthermore, as $\cD(1-f)$ is proper and convex, the standard embedding of hermitian $n\times n$ matrices into symmetric $(2n)\times (2n)$ matrices implies that $\cD(1-\tf)$ is also proper and convex. Therefore
$$\tf=\tell_0+\sum_{k>0}\tell_k^*\tell_k$$
for some linear $\tell_k\in\py$ by \cite[Theorem 1.5]{HKMV}. Note that $f=\tf(x/2,x/2)$. Since $\tf$ is hermitian, $\tell$ is hermitian, so $\tell_0(x/2,x/2)$ is symmetric. Furthermore,
$$\tell_k^*\tell_k
=(\re \tell_k)^2+(\im \tell_k)^2
+i[\re \tell_k,\im \tell_k]$$
for $k>0$; since $f$ is symmetric, $\sum_{k>0}\tell_k(x/2,x/2)^*\tell_k(x/2,x/2)$ is a sum of squares in $\px$. Hence $f$ is of the form \eqref{e:conc}.
\end{proof}

\begin{rem}\label{r:lmi}
If $f$ is of the form \eqref{e:conc}, then it is easy to present $\cD(1-f)$ as the solution set of a linear matrix inequality, so $\cD(1-f)$ is convex.
\end{rem}

\begin{lem}\label{l:quad}
Let $h=\ell_0+\sum_{k>0}\ell_k^2$ for some linear $\ell_k\in\px$, and let $\ult=(t_1,\dots,t_d)$ be the coordinates of $\R^d$.
\begin{enumerate}[(i)]
	\item If $\beta>0$, then $h+\beta$ is a sum of squares in $\px$ if and only if $h(\ult)+\beta$ is a sum of squares in $\R[\ult]$.
	\item If $\cD_1(\alpha-h)\subseteq\cD_1(\beta+h)$ for some $\alpha,\beta>0$, then $\beta+h$ is a sum of squares. 
\end{enumerate}
\end{lem}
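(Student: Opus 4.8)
Both parts will be handled via the Gram-matrix description of sums of squares of degree-$\le 2$ polynomials. For (i), one implication is immediate: applying the abelianization $*$-homomorphism $\px\to\R[\ult]$, $x_j\mapsto t_j$ — under which the involution becomes the identity, so a hermitian square $p^*p$ goes to the square $p(\ult)^2$ — turns a representation $h+\beta=\sum_i p_i^*p_i$ into $h(\ult)+\beta=\sum_i p_i(\ult)^2$. For the converse I would exploit that $\deg(h+\beta)\le 2$. With $v=(1,x_1,\dots,x_d)^T$, every symmetric $p\in\px$ of degree $\le 2$ equals $v^*Gv$ for a unique symmetric $G\in\sm{d+1}$, and substituting $x_j\mapsto t_j$ exhibits the very same $G$ as the (likewise unique) Gram matrix of $p(\ult)\in\R[\ult]$ with respect to $(1,t_1,\dots,t_d)$. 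Moreover, in any sum-of-squares representation of a degree-$\le 2$ polynomial the summands must be affine — otherwise the sum of their top homogeneous parts is a vanishing sum of (hermitian) squares of positive-degree polynomials, forcing those parts to vanish — so, factoring $G=R^TR$ whenever $G\succeq 0$, one sees that ``$p$ is a sum of squares in $\px$'', ``$G\succeq 0$'', and ``$p(\ult)$ is a sum of squares in $\R[\ult]$'' are all equivalent. Taking $p=h+\beta$ yields (i).

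For (ii) I would pass to $\R^d$. Since $n=1$, $\cD_1(g)$ is the closure of the connected component about the origin of $\{\ult\in\R^d\colon g(\ult)>0\}$, $g$ being evaluated pointwise; I will record at the outset that the hypothesis forces the origin to lie in both open sets, i.e. $-\beta<h(0)<\alpha$. The engine of the proof is that $h(\ult)=\ell_0(\ult)+\sum_{k>0}\ell_k(\ult)^2$ is a \emph{convex} function on $\R^d$, being a sum of an affine function and squares of affine functions. Hence $\{\ult\colon h(\ult)<\alpha\}$ is convex, so it is connected and therefore equals its own component about the origin, giving $\cD_1(\alpha-h)=\overline{\{h(\ult)<\alpha\}}$. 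Now suppose $\beta+h(\ult_0)<0$ for some $\ult_0\in\R^d$. Then $h(\ult_0)<-\beta<0<\alpha$, so $\ult_0\in\{h(\ult)<\alpha\}\subseteq\cD_1(\alpha-h)\subseteq\cD_1(\beta+h)$; but $\cD_1(\beta+h)$ is the closure of a subset of $\{h(\ult)>-\beta\}$, so some sequence converging to $\ult_0$ satisfies $h>-\beta$ along it, and continuity gives $h(\ult_0)\ge-\beta$, a contradiction. Thus $\beta+h(\ult)\ge 0$ on all of $\R^d$; being a nonnegative polynomial of degree $\le 2$ it is then a sum of squares in $\R[\ult]$, and part (i) promotes this to $\beta+h$ being a sum of squares in $\px$.

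The main obstacle I anticipate is the degree bound buried in (i): one must argue that a sum-of-squares representation of the degree-$\le 2$ polynomial $h+\beta$ uses only affine summands, so that the Gram-matrix criterion applies verbatim. This is the standard top-homogeneous-part argument — a vanishing sum of hermitian squares of homogeneous polynomials has every summand zero, as one sees by evaluating on real matrix tuples and using that $\R$ is infinite — but it does have to be written down. Everything else is elementary: the convexity of the pointwise evaluation of $h$ on $\R^d$ is what makes (ii) work, and the one piece of bookkeeping there is noting that the hypothesis presupposes $-\beta<h(0)<\alpha$, which is exactly what makes $\cD_1(\alpha-h)$ and $\cD_1(\beta+h)$ well defined.
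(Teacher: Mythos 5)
Your proof is correct and follows essentially the same route as the paper: part (i) via the unique symmetric Gram matrix $v^*Sv$ with $v^*=(1,x_1,\dots,x_d)$, whose positive semidefiniteness is equivalent to being a sum of squares both in $\px$ and in $\R[\ult]$, and part (ii) via convexity of the quadratic function $h(\ult)$ on $\R^d$, which forces $h\ge-\beta$ everywhere and reduces to the fact that nonnegative quadratics are sums of squares. You simply spell out the details (affineness of the summands, identification of the two Gram matrices) that the paper compresses into ``it is easy to see.''
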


\begin{proof}
(i) Observe that $h+\beta$ has a unique representation $h+\beta=v^* S v$, where $S\in\sm{d+1}$ and $v^*=(1,x_1,\dots,x_d)$. It is easy to see that $h(\ult)+\beta$ is a sum of squares in $\R[t]$ if and only if $S\succeq0$, which is further equivalent to $h+\beta$ being a sum of squares in $\px$.

(ii) Since $\cD_1(\alpha-h)$ is convex, we have
$$h(\tau)\le\alpha\ \Rightarrow\ h(\tau)\ge -\beta$$
for all $\tau\in\R^d$. That is, an upper bound on $h(\ult)$ implies a lower bound on $h(\ult)$, which is clearly possible only if $h(\tau)\ge -\beta$ for all $\tau\in\R^d$. Since $h(\ult)+\beta$ is a quadratic nonnegative polynomial, it is a sum of squares in $\R[\ult]$. Now (ii) follows by (i).
\end{proof}

Recall that a symmetric $f\in\px$ with $f(0)=0$ is {\bf \wqc} if there exists $\ve>0$ such that $\cD(\lambda-f)$ is convex for every $\lambda\in (0,\ve)$.

\begin{thm}\label{t:wqc}
Le $f\in\px$ be symmetric with $f(0)=0$. The following are equivalent:
\begin{enumerate}[(i)]
\item $f$ is \wqc;
\item $\cD(\lambda-f)$ is convex for every $\lambda> 0$;
\item $-f$ is a sum of hermitian squares; or
\begin{equation}\label{e:cc}
f=p(\ell_0+\ell_1^2+\cdots+\ell_m^2)
\end{equation}
for $p\in\R[t]$ with $p(0)=0$ and linear $\ell_0,\dots,\ell_m\in\px$ satisfying one of the following:
\begin{enumerate}[(a)]
	\item $p(\tau)\le 0$ for $\inf_{\R^d} (\ell_0+\ell_1^2+\cdots+\ell_m^2)<\tau<0$,
	\item $\ell_k=0$ for all $k>0$.
\end{enumerate}
\end{enumerate}
\end{thm}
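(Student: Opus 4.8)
The implication $(ii)\Rightarrow(i)$ is trivial. For $(iii)\Rightarrow(ii)$: if $-f$ is a sum of hermitian squares then $f(X)\preceq0$ for every $X\in\all$, so $\cD(\lambda-f)=\all$ for all $\lambda>0$. Otherwise $f=p(q)$ with $q:=\ell_0+\ell_1^2+\cdots+\ell_m^2$, and the plan is to show that for every $\lambda>0$ the connected component of $\{X\colon\lambda I-f(X)\succ0\}$ about the origin equals $\{X\colon q(X)\prec\beta_\lambda I\}$, where $(\alpha_\lambda,\beta_\lambda)$ is the connected component of $p^{-1}((-\infty,\lambda))$ containing $0$. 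This rests on three facts: (1) the homogeneous degree-two part of $q$ is a sum of hermitian squares (the squared degree-one parts of the $\ell_k$), so $X\mapsto q(X)$ is matrix convex and $\{X\colon q(X)\prec\beta_\lambda I\}$ is convex; (2) $\lambda_{\min}(q(X))\ge\inf_{\R^d}q(\ult)$ for all $X\in\all$, by Cauchy--Schwarz applied to $\langle q(X)v,v\rangle=\langle\ell_0(X)v,v\rangle+\sum_{k>0}\|\ell_k(X)v\|^2$; (3) conditions (a)/(b) force $p<\lambda$ on $[\inf_{\R^d}q(\ult),0]$ (in case (a) this is immediate from the hypothesis together with continuity of $p$), hence $\alpha_\lambda=-\infty$ or $\alpha_\lambda<\inf_{\R^d}q(\ult)$, and then (2) makes the constraint $q(X)\succ\alpha_\lambda I$ automatic. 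Thus $\cD_n(\lambda-f)$ is the closure of a convex set; it equals $\cD_n(\beta_\lambda-q)$ when $\beta_\lambda<\infty$ (convex by Remark~\ref{r:lmi} after rescaling $\ell_k$) and $\sm{n}^d$ otherwise, and in case (b) the set is outright spectrahedral.

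\textbf{Setup for $(i)\Rightarrow(iii)$.} Suppose $f$ is \wqc. If $f(X)\preceq0$ for every $X\in\all$, then $-f$ is a sum of hermitian squares and we are done, so assume $f(X_0)$ has a positive eigenvalue for some $X_0$. Write $f=p(h)$ with $p\in\R[t]$, $p(0)=0$, $\deg p\ge1$, and $h\in\px$ non-composite with $h(0)=0$; here $h$ is unique up to an affine substitution because the centralizer of $p(h)$ is $\R[h]$ by \cite[Theorem~5.3]{Ber}. Since $f=f^*=p(h^*)$ with $h^*$ non-composite of degree $\deg h$ and commuting with $f$, we get $h^*\in\R[h]$, hence $h^*=\alpha h+\beta$; applying $*$ once more yields $\alpha^2=1$ and $(\alpha+1)\beta=0$, so either $h^*=h$ or, after replacing $h$ by $h-\beta/2$ and $p$ accordingly, $h^*=-h$.

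\textbf{Symmetric case.} Assume $h^*=h$. Fix a generic small $\lambda\in(0,\ve)$: free Bertini (Theorem~\ref{t:bertini}) excludes only finitely many $\lambda$, so the two roots $\alpha_\lambda<0<\beta_\lambda$ of $p=\lambda$ adjacent to $0$ make $h-\alpha_\lambda$ and $h-\beta_\lambda$ irreducible over $\C$ with reduced irreducible free loci on $\sm{n}^d$ for $n\ge N$ (Corollary~\ref{c:irr}). A path-lifting argument on the eigenvalues of $h(X)$ along paths from $0$ inside $\{X\colon\lambda I-f(X)\succ0\}=\{X\colon\operatorname{spec}h(X)\subseteq p^{-1}((-\infty,\lambda))\}$ gives $\partial\cD_n(\lambda-f)\subseteq\big(L_{\alpha_\lambda}(h)\cup L_{\beta_\lambda}(h)\big)\cap\sm{n}^d$. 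Now $\cD(\lambda-f)$ is convex (hypothesis) and proper (the loci above are genuine hypersurfaces), so by \cite{HM} it is the solution set of a minimal monic linear matrix inequality $L$; since its algebraic boundary lies in a union of two reduced irreducible hypersurfaces, $L$ splits, up to conjugation, into blocks whose free loci are among $L_{\alpha_\lambda}(h)$ and $L_{\beta_\lambda}(h)$. Replacing $h$ by $-h$ if needed, I may assume the $\beta_\lambda$-block $L^+$ occurs, and I would argue $\cD_n(\beta_\lambda-h)=\cD_n(L^+)$ is spectrahedral; then, since $\beta_\lambda-h$ is symmetric, irreducible over $\C$, with spectrahedral positivity component about $0$, a rescaled version of Proposition~\ref{p:hkmv} yields $h=\ell_0+\ell_1^2+\cdots+\ell_m^2$. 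Hence $f=p(q)$ with $q:=\ell_0+\sum_k\ell_k^2$, of the form \eqref{e:cc} after absorbing a sign into $p$. If $\ell_k=0$ for $k>0$ we land in (b); otherwise $q$ is genuinely quadratic, so $d\ge2$ (a univariate quadratic would be composite), and (a) follows from the $n=1$ instance of \wqc: $\cD_1(\lambda-f)$ is the closure of the component about $0$ of $q^{-1}((\alpha_\lambda,\beta_\lambda))\subseteq\R^d$, and since $q(\ult)$ is a convex quadratic this closure is convex for all small $\lambda$ only if $\{q(\ult)\le\alpha_\lambda\}=\varnothing$, i.e.\ $\alpha_\lambda<\inf_{\R^d}q(\ult)$; letting $\lambda\to0^+$ gives $p(\tau)\le0$ for $\inf_{\R^d}q(\ult)<\tau<0$, which is (a) (alternatively one reaches this through Lemma~\ref{l:quad}).

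\textbf{Skew case and main obstacle.} If $h^*=-h$, then $-h^2=h^*h$ is a sum of hermitian squares, $f=P(h^2)$ for an even $P\in\R[t]$ with $P(0)=0$, and $h(X)^*h(X)\succeq0$ for all $X$, so $\cD_n(\lambda-f)$ is the closure of the component about $0$ of $\{X\colon h(X)^*h(X)\prec\kappa_\lambda I\}$, i.e.\ the positivity component of the hermitian polynomial $\kappa_\lambda-h^*h$, which over $\C$ is reducible as $-(h-i\sqrt{\kappa_\lambda})(h+i\sqrt{\kappa_\lambda})$. I would rerun the symmetric-case analysis (minimal-pencil splitting plus a degree count on the active boundary factor, of degree $\deg h$): convexity forces $\deg h=1$, so $h$ is linear and we are in (b) — unless $-f$ was already a sum of hermitian squares. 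The step I expect to be the genuine obstacle, common to both cases, is the passage from ``$\cD(\lambda-f)$ is convex'' to the $\ell_0+\ell_1^2+\cdots+\ell_m^2$ shape: one must distil from the minimal monic LMI an \emph{irreducible, symmetric} boundary polynomial vanishing at (a translate of) the origin so that Proposition~\ref{p:hkmv} (equivalently \cite[Theorem~1.5]{HKMV}) really applies, and matching the ``vanishes at the origin'' requirement with ``irreducible'' is delicate because a nonzero translate of an irreducible polynomial need not be irreducible. Overcoming this is where the genericity of $\lambda$ from free Bertini, Bergman's finiteness (Proposition~\ref{p:berg}), and the structure theory of minimal pencils \cite{HM,HKMV,HKV1} must be combined carefully.
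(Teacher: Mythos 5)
Your easy direction and your setup are sound (and your explicit treatment of the possibility $h^*=-h$ is more careful than strictly necessary), but the hard direction (i)$\Rightarrow$(iii) has a genuine gap, and it is exactly the one you flag yourself: the passage from convexity of $\cD(\lambda-f)$ to the applicability of Proposition \ref{p:hkmv}. Your route --- realize $\cD(\lambda-f)$ as a minimal monic LMI via \cite{HM}, split the pencil into blocks matching the irreducible components $L_{\alpha_\lambda}(h)$, $L_{\beta_\lambda}(h)$ of the boundary, and extract from one block a spectrahedral representation of $\cD(\beta_\lambda-h)$ --- is never carried out; the block-splitting and the identification $\cD_n(\beta_\lambda-h)=\cD_n(L^+)$ would require the full minimal-pencil/Gleichstellensatz machinery, and you concede you do not see how to close it. The difficulty is real: knowing only that $\partial\cD_n(\lambda-f)$ sits inside a union of two irreducible hypersurfaces does not by itself tell you that either one-sided set $\cD(\beta_\lambda-h)$ or $\cD(-\alpha_\lambda+h)$ is convex, and Proposition \ref{p:hkmv} needs convexity of a positivity domain of a \emph{single} irreducible polynomial.

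The paper closes this gap by a different, much lighter case analysis. First it upgrades your boundary containment to the exact set identity $\cD(\lambda-f)=\cD(-\nu_\lambda+h)\cap\cD(\pi_\lambda-h)$ (your $\alpha_\lambda,\beta_\lambda$), possibly with one factor absent. If only one constraint is present, or if the intersection is redundant (say $\cD(\pi_\lambda-h)\subseteq\cD(-\nu_\lambda+h)$), then $\cD(\lambda-f)$ literally equals $\cD(\pi_\lambda-h)$, so Proposition \ref{p:hkmv} applies directly to $h/\pi_\lambda$ (irreducible for generic $\lambda$ by Theorem \ref{t:bertini}, since $\lambda\mapsto\pi_\lambda$ is injective); Lemma \ref{l:quad}(ii) then converts the redundant constraint into nonnegativity of $-\nu_\lambda+h$ on $\R^d$, which is precisely condition (a). If the intersection is irredundant, the key external input is \cite[Corollary 1.2]{HKMV}: each member of an irredundant intersection of positivity domains whose intersection is convex is itself convex. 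Applying Proposition \ref{p:hkmv} to both $h$ and $-h$ then forces the quadratic part of $h$ to vanish, i.e.\ $h$ is linear, which is condition (b). You never invoke \cite[Corollary 1.2]{HKMV}, and that is the missing tool; with it, the minimal-LMI detour (and the delicate ``irreducible translate through the origin'' issue you worry about) disappears entirely. Your derivation of (a) from the $n=1$ instance is essentially Lemma \ref{l:quad} and is fine once the preceding step is repaired.
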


\begin{proof}
(ii)$\Rightarrow$(i) Clear.

(i)$\Rightarrow$(iii) Let $\ve>0$ be such that $\cD(\lambda-f)$ is convex for every $\lambda\in (0,\ve)$. If $\cD(\lambda-f)=\all$ for all such $\lambda$, then $-f(X)$ is positive semidefinite for every $X\in\all$, so $-f$ is a sum of hermitian squares by \cite{Hel,McC}. Otherwise we can without loss of generality assume that $\cD(\lambda-f)\neq\all$ for $\lambda\in (0,\ve)$. If $\lambda-f$ is irreducible over $\C$ for some such $\lambda$, then $f$ is of the form \eqref{e:conc} by Proposition \ref{p:hkmv}, and (a) holds with $p=t$. If $\lambda-f$ factors in $\px$ for all $\lambda\in(0,\ve)$, then $f=p(h)$ for some $p\in\R[t]$ and a non-composite $h\in\px$ with $p(0)=0=h(0)$ by Theorem \ref{t:bertini}. Since $f$ is symmetric, $h$ is also symmetric because it is unique up to a scalar multiple. Furthermore, $-p$ is not a sum of squares since $-f$ is not a sum of hermitian squares.

Let us introduce some auxiliary notation. If $\lambda-p$ attains a negative value on $(0,\infty)$, let $\pi_\lambda\ge0$ be such that
$$(\lambda-p)|_{[0,\pi_\lambda]}\ge0,\qquad
\exists\ve'>0\colon (\lambda-p)|_{(\pi_\lambda,\pi_\lambda+\ve')}< 0.$$
If $\lambda-p$ attains a negative value on $(-\infty,0)$, let $\nu_\lambda\le0$ be such that
$$(\lambda-p)|_{[\nu_\lambda,0]}\ge0,\qquad
\exists\ve'>0\colon (\lambda-p)|_{(\nu_\lambda-\ve',\nu_\lambda)}< 0.$$
Then $\pi_\lambda,\nu_\lambda$ are zeros of $\lambda-p$ and strictly monotone functions in $\lambda$, continuous for $\lambda$ close to $0$.

We distinguish two cases. First suppose that $-p$ is nonnegative on $(-\infty,0)$ or $(0,\infty)$. By replacing $p(t),h$ with $p(-t),-h$ if necessary, we can assume that $-p$ attains a negative value on $(0,\infty)$. Then
$$\cD(\lambda-f)=\cD(\pi_\lambda-h)$$
for all small enough $\lambda>0$. Since $h$ is not composite, $\pi_\lambda-h$ is irreducible for all but finitely many $\lambda$ by Theorem \ref{t:bertini}. Because $\cD(\lambda-f)$ is convex, $h$ is of the form \eqref{e:conc} by Proposition \ref{p:hkmv}, so (a) holds.

Now suppose that $-p$ attains negative values on$(-\infty,0)$ and $(0,\infty)$. Then
\begin{equation}\label{e:int}
\cD(\lambda-f)=\cD(-\nu_\lambda+h)\cap\cD(\pi_\lambda-h)
\end{equation}
for all small enough $\lambda>0$. Suppose that one the sets $\cD(-\nu_\lambda+h)$ and $\cD(\pi_\lambda-h)$ is contained in the other. By replacing $p(t),h$ with $p(-t),-h$ if necessary, we can assume that $\cD(\pi_\lambda-h)\subseteq \cD(-\nu_\lambda+h)$. Since $h$ is not composite, $\pi_\lambda-h$ is irreducible for all but finitely many $\lambda$, so $h$ is of the form \eqref{e:conc} by convexity of $\cD(\lambda-f)$ and Proposition \ref{p:hkmv}. By Lemma \ref{l:quad}, $-\nu_\lambda+h$ is a sum of squares. If $\mu=-\lim_{\lambda\downarrow 0}\nu_\lambda$, then $\mu+h$ is nonnegative on $\R^d$ and $-p$ is nonnegative on $[-\mu,0]$, so (a) holds. 
Finally we are left with the scenario where the intersection \eqref{e:int} is irredundant. Then 
$\cD(-\nu_\lambda+h)$ and $\cD(\pi_\lambda-h)$ are both convex by \cite[Corollary 1.2]{HKMV}. By Proposition \ref{p:hkmv} we conclude that $h$ is linear, so (b) holds.

(iii)$\Rightarrow$(ii) If $-f$ is a sum of hermitian squares, then $\cD(\lambda-f)=\all$ for every $\lambda>0$. Otherwise let $f$ be as in \eqref{e:cc}. Then $\cD(\lambda-f)$ equals one of
$$\all,\quad\cD(\pi_\lambda-h),\quad
\cD(-\nu_\lambda+h),\quad
\cD(-\nu_\lambda+h)\cap\cD(\pi_\lambda-h),$$
depending on the existence of $\nu_\lambda,\pi_\lambda$. Note that $\cD(\pi_\lambda-h)$ is always convex by Remark \ref{r:lmi}. If (b) holds, then $\cD(\lambda-f)$ is convex for $\lambda>0$ since $h$ is linear and intersection of convex sets is again convex. If (a) holds, then $\nu_\lambda\le \inf_{\R^d} (\ell_0+\sum_{k>0}\ell_k^2)$, so $\cD(-\nu_\lambda+h)=\all$ and $\cD(\lambda-f)$ is convex for $\lambda>0$.
\end{proof}

\begin{rem}
Few comments on the condition (a) in Theorem \ref{t:wqc} are in order. Let
$$\mu=\inf_{\R^d} (\ell_0+\ell_1^2+\cdots+\ell_m^2).$$
Then $\mu>-\infty$ if and only if $\ell_0$ lies in the linear span of $\ell_1,\dots,\ell_m$; more precisely, if $\ell_1,\dots,\ell_m$ are linearly independent and $\ell_0=\alpha_1\ell_1+\cdots+\alpha_m\ell_m$, then $-4\mu=\alpha_1^2+\cdots+\alpha_m^2$. This follows from considering $\ell_0+\ell_1^2+\cdots+\ell_m^2+\mu=v^*Sv$ for $v^*=(1,x_1,\dots,x_d)$ and $S\succeq0$ as in the proof of Lemma \ref{l:quad}(i). Furthermore, using an algebraic certificate for nonnegativity \cite[Prop 2.7.3]{Mar}, the condition (a) can also be stated as follows. Let $S\subset\R[t]$ be the convex cone of sums of (two) squares. If $\mu=-\infty$, then
$$\sup_{(-\infty,0]}p=p(0)=0 \quad\iff\quad p\in t(S-tS);$$
and if $-\infty<\mu\le0$, then
$$\max_{[\mu,0]}p=p(0)=0 \quad\iff\quad p\in t(S-tS+(t-\mu)(S-tS)).$$
\end{rem}

\begin{rem}
Another aspect of Theorem \ref{t:wqc} is the following. Proposition \ref{p:hkmv} states that every irreducible symmetric polynomial with a convex positivity domain is quadratic (and concave). On the other hand, there is no shortage of reducible symmetric polynomials that contain a factor of degree at least $3$ and have convex positivity domain; see \cite[Sections 5 and 6]{HKMV}. However, if the constant term of such a polynomial is slightly perturbed, then its positivity domain is no longer convex by Theorem \ref{t:wqc}.
\end{rem}

%%%%%%%%%%%%%%%%%%%%%%%%
%%%%%%%%%%%%%%%%%%%%%%%%%%%%%%%%%%%%%%%%%%%%%%%%%%%


\begin{thebibliography}{KK}

\bibitem[BM14]{BM}
S. Balasubramanian, S.  McCullough:
{\it Quasi-convex  free  polynomials},
Proc. Amer. Math. Soc. 142 (2014) 2581--2591.

\bibitem[Ber69]{Ber}
G. M. Bergman:
{\it Centralizers in free associative algebras},
Trans. Amer. Math. Soc. 137 (1969) 327--344.

\bibitem[BDN09]{BDN}
A. Bodin, P. D\`ebes, S. Najib:
{\it Irreducibility of hypersurfaces}, 
Comm. Algebra 37 (2009) 1884--1900.

\bibitem[BPT13]{BPT}
G. Blekherman, P. A. Parrilo, R. R. Thomas (eds.):
{\it Semidefinite optimization and convex algebraic geometry},
MOS-SIAM Ser. Optim. 13, SIAM, Philadelphia, PA, 2013. 

\bibitem[Coh06]{Coh}
P. M. Cohn:
{\it Free Ideal Rings and Localization in General Rings},
New Mathematical Monographs 3, Cambridge University Press, Cambridge, 2006.

\bibitem[DD-OSS17]{DDOSS}
K. R. Davidson, A. Dor-On, O. M. Shalit, B. Solel:
{\it Dilations, Inclusions of Matrix Convex Sets, and Completely Positive Maps},
Int. Math. Res. Not. IMRN (2017) 4069--4130. 

\bibitem[Hel02]{Hel}
J. W. Helton:
``Positive noncommutative polynomials are sums of squares'',
Ann. Math. 156 (2002) 675--694.

\bibitem[HKM13]{HKM}
J. W. Helton, I. Klep, S. McCullough:
{\it The matricial relaxation of a linear matrix inequality},
Math. Program. 138 (2013) 401--445.

\bibitem[HKM12]{HKM0}
J. W. Helton, I. Klep, and S. McCullough:
{\it Free analysis, convexity and LMI domains},
Mathematical methods in systems, optimization, and control, 195--219,
Oper. Theory Adv. Appl. 222, Birkh\"auser/Springer Basel AG, Basel, 2012. 

\bibitem[HKMV]{HKMV}
J. W. Helton, I. Klep, S. McCullough, J. Vol\v{c}i\v{c}:
{\it Noncommutative polynomials describing convex sets},
preprint \texttt{arXiv:1808.06669}.

\bibitem[HKV18]{HKV}
J.W. Helton, I. Klep, J. Vol\v ci\v c:
{\it Geometry of free loci and factorization of noncommutative polynomials},
Adv. Math. 331 (2018) 589--626.

\bibitem[HKV]{HKV1}
J.W. Helton, I. Klep, J. Vol\v ci\v c:
{\it Factorization of noncommutative polynomials and Nullstellens\"atze for the free algebra},
preprint \texttt{arXiv:1907.04328}.

\bibitem[HM12]{HM}
J.W. Helton, S. McCullough:
\textit{Every convex free basic semi-algebraic set has an LMI representation}, 
Ann. of Math. (2) 176 (2012) 979--1013.

\bibitem[K-VV09]{KVV1}
D. S. Kalyuzhnyi-Verbovetskyi, V. Vinnikov:
{\it Singularities of rational functions and minimal factorizations: the noncommutative and the commutative setting},
Linear Algebra Appl. 430 (2009) 869--889.

\bibitem[K-VV14]{KVV}
D. S. Kalyuzhnyi-Verbovetskyi,  V. Vinnikov:
{\it Foundations of free noncommutative function theory},
Mathematical Surveys and Monographs 199, American Mathematical Society, Providence RI, 2014.

\bibitem[Kle98]{Kle}
S. L. Kleiman:
{\it Bertini and his two fundamental theorems},
Studies in the history of modern mathematics III,
Rend. Circ. Mat. Palermo (2) Suppl. No. 55 (1998) 9--37.

\bibitem[KV17]{KV}
I. Klep, J. Vol\v{c}i\v{c}:
{\it Free loci of matrix pencils and domains of noncommutative rational functions},
Comment. Math. Helv. 92 (2017) 105--130.

\bibitem[Mar08]{Mar}
M. Marshall:
{\it Positive polynomials and sums of squares},
Mathematical Surveys and Monographs 146, American Mathematical Society, 2008.

\bibitem[McC01]{McC}
S. McCullough:
{\it Factorization of operator-valued polynomials in several non-commuting variables},
Linear Algebra Appl. 326 (2001) 193--203.

\bibitem[Sha94]{Sha}
I. R. Shafarevich:
{\it Basic algebraic geometry 1. Varieties in projective space},
2nd edition, Springer-Verlag, Berlin, 1994.

\bibitem[Sch00]{Sch}
A. Schinzel:
{\it Polynomials with special regard to reducibility},
Encyclopedia of Mathematics and its Applications 77, Cambridge University Press, Cambridge, 2000.

\bibitem[Ste89]{Ste}
Y. Stein:
{\it The total reducibility order of a polynomial in two variables},
Israel J. Math. 68 (1989) 109--122.

\end{thebibliography}
\end{document}